\documentclass[a4paper]{amsart}
\usepackage{amssymb, amsmath, amsthm}
\usepackage{stmaryrd, esint}
\usepackage{xcolor}
\usepackage{hyperref}

\newtheorem{theorem}{Theorem}[section]
\newtheorem{lemma}[theorem]{Lemma}
\newtheorem{proposition}[theorem]{Proposition}
\newtheorem{corollary}[theorem]{Corollary}

\theoremstyle{definition}
\newtheorem{definition}[theorem]{Definition}

\theoremstyle{remark}
\newtheorem{remark}[theorem]{Remark}

\numberwithin{equation}{section}

\newcommand{\R}{\mathbb{R}}

\newcommand{\C}{\mathbb{C}}
\newcommand{\N}{\mathbb{N}}

\newcommand{\mint}{\mathop{\int\hspace{-1.05em}{\--}}\nolimits}

\DeclareMathOperator{\dist}{dist}
\DeclareMathOperator{\spt}{spt}
\DeclareMathOperator{\Ind}{Ind}
\DeclareMathOperator{\Nul}{Nul}

\title[Index estimates]{Index estimates for sequences of harmonic maps}
\author{Jonas Hirsch} 
\address[J.~Hirsch]{Mathematisches Institut\\ Universit\"at Leipzig\\ Augustusplatz 10\\ 04109 Leipzig\\ Germany}
\email{hirsch@math.uni-leipzig.de}

\author{Tobias Lamm}
\address[T.~Lamm]{Institute for Analysis\\
Karlsruhe Institute of Technology \\ Englerstr. 2\\
76131 Karlsruhe\\ Germany}
\email{tobias.lamm@kit.edu}

\date{\today}
\thanks{We thank the referee for the careful reading of the paper and the helpful comments.}
\begin{document}
\begin{abstract}
In this paper we study upper and lower bounds of the index and the nullity for sequences of harmonic maps with uniformly bounded Dirichlet energy from a two-dimensional Riemann surface into a compact target manifold. The main difficulty stems from the fact that in the limit the sequence can develop finitely many bubbles. We obtain the index bounds by studying the limiting behavior of sequences of eigenfunctions of the linearized operator and the key novelty of the present paper is that we diagonalize the index form of the Dirichlet energy with respect to a bilinear form which varies with the sequence of harmonic maps and which helps us to show the convergence of the sequence of eigenfunctions on the weak limit, the bubbles and the intermediate neck regions. 

Finally, we sketch how to modify our arguments in order to also cover the more general case of sequences of critical points of two-dimensional conformally invariant variational problems. 
\end{abstract}
\maketitle

\section{Introduction}
For a closed Riemann surface $(M,g)$ and a closed Riemannian manifold $(N,h)$, which we assume to be isometrically embedded into some euclidean space $\R^m$, we define the Dirichlet energy for maps $u\in W^{1,2}(M,N)$ by
\[
E(u)=\frac12 \int_M |\nabla u|^2\, dv_g.
\]
Critical points of $E$ are called harmonic maps and they are solutions of the elliptic partial differential equation
\[
-\Delta u = A(u)(\nabla u, \nabla u),
\]
where $A$ is the second fundamental form of the embedding of $N \hookrightarrow \R^m$. 

In this paper we study sequences of harmonic maps $u_k\in C^\infty(M,N)$, $k\in \N$, with uniformly bounded Dirichlet energy. By now it is well-known that such a sequence has a weak limit $u_0\in C^\infty(M,N)$  which is again a harmonic map and that we actually have local smooth convergence away from at most finitely many singular points. Around these finitely many points one can perform a suitable blow-up and in the limit one finds at most finitely many harmonic maps $\omega_i\in C^\infty (S^2,N)$, $i=1,\ldots,L$, the so called bubbles. Hence one has a clear understanding of the convergence behaviour of such a sequence of harmonic maps away from the finitely many singular points and also close to the singular points via the suitably chosen blow-up. The difficult part is to understand what is going on in the intermediate region which we call the neck region.

Over the last 25 years many people have contributed to a better understanding of the convergence in the neck region and it has been shown that the so called energy identity and the no-neck property hold true, see for example \cite{chen99,ding95,lammsharp,lin98,lin99,lin02,parker96,qing95,qing97,sacks81,yin1,yin2}. Here the energy identity corresponds to the fact that in the limit there is no energy in the neck region and the no-neck property shows that the weak limit $u_0$ and the bubbles $\omega_i$ are actually all pointwise connected.
Thus there is a very satisfying theory available for the convergence of the sequence $u_k$.

In contrast to this, here we are not interested in the behaviour of the energy but of the index of a sequence of harmonic maps with uniformly bounded energy. In particular we are interested in upper and lower bounds on ``the index of the limiting bubble tree''.
Recall that the index $\Ind(u)$ is defined to be the dimension of the maximal subspace on which the second variation of the Dirichlet energy $E$ is negative definite. Recall that the second variation of $E$ is given by 
\[
D^2E(u)(X,X)= \int_{M} (|\nabla X|^2 - \langle A^2_u(X),X \rangle)\, dv_g,
\]
where $X\in W^{1,2}(M,u^*TN):=\{ X\in W^{1,2}(M,\R^m):\, X(x)\in T_{u(x)}N \ \ \text{for}\ \ \text{a.e.} \ \ x\in M\}$ and
\[
\langle A^2_u(X),Y\rangle =\langle A_u(\nabla u,\nabla u), A_u(X,Y)\rangle
\]
for $X,Y\in W^{1,2}(M,u^*TN)$.
Additionally, the nullity $\Nul(u)$ is defined to be the dimension of the kernel of the bilinear form associated to $D^2E(u)$. We remark that harmonic maps with a controlled index have been constructed respectively studied in \cite{micallefmoore,sun1,sun2}. 

To be more precise, the goal of this paper is to show bounds for the index of the sequence $u_k$ in terms of the index (and nullity) of the limiting objects $u_0$ respectively $\omega_i$ with $1\leq i \leq L$.
Our first Theorem is a lower bound and reads as follows (see section \ref{lower bound} for more details).
\begin{theorem}\label{main1}
Let $u_k\in W^{1,2}(M,N)$ be a sequence of harmonic maps with uniformly bounded energy $E(u_k) \leq C$ and with a weak limit $u_0\in C^\infty(M,N)$ and finitely many bubbles $\omega_i\in C^\infty(M,N)$, $1\leq i \leq L$, as described above. Then we have the estimate
\[
\Ind(u_0)+\sum_{i=1}^l \Ind(\omega_i) \leq \liminf_{k\to \infty} \Ind(u_k),
\]
where $\Ind(\cdot)$ denotes the index of the corresponding map.
\end{theorem}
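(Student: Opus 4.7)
The plan is to reverse-engineer the estimate: for any integers $d_0\leq\Ind(u_0)$ and $d_i\leq\Ind(\omega_i)$, $i=1,\dots,L$, I will build inside $W^{1,2}(M,u_k^*TN)$ a subspace of dimension $D:=d_0+\sum_{i=1}^L d_i$ on which $D^2E(u_k)$ is negative definite for every sufficiently large $k$. Once this is done, $\Ind(u_k)\geq D$ eventually, and taking $d_0\nearrow\Ind(u_0)$, $d_i\nearrow\Ind(\omega_i)$ proves the claim. The three main ingredients are: (i) smooth convergence of $u_k\to u_0$ away from the finite singular set $\Sigma=\{p_1,\dots,p_L\}$; (ii) for each bubble the standard blow-up data $x_{i,k}\to p_{\sigma(i)}\in\Sigma$ and $\lambda_{i,k}\to 0^+$ with $u_k(x_{i,k}+\lambda_{i,k}y)\to\omega_i(y)$ in $C^\infty_{\mathrm{loc}}(\R^2,N)$ under a conformal chart; and (iii) the two-dimensional conformal invariance of the Dirichlet integral, so that the bubble index form is \emph{exactly} the form inherited from rescaling.

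\textbf{Step 1: choosing negative directions on the limit objects.} By definition of the index, pick smooth compactly supported sections $Y^1,\dots,Y^{d_0}$ of $u_0^*TN$ spanning a subspace on which $D^2E(u_0)$ is negative definite. A standard logarithmic-cutoff argument allows us to arrange $\spt Y^j\Subset M\setminus\Sigma$ at the cost of an arbitrarily small perturbation of the Gram matrix $\bigl(D^2E(u_0)(Y^j,Y^\ell)\bigr)$, preserving negative definiteness. For each bubble $\omega_i\colon S^2\to N$, choose analogously $Z^{i,1},\dots,Z^{i,d_i}$, smooth sections of $\omega_i^*TN$ compactly supported in $\R^2\simeq S^2\setminus\{\infty\}$ under stereographic projection, spanning a negative subspace of $D^2E(\omega_i)$.

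\textbf{Step 2: transplantation.} Let $\pi_N$ denote the nearest-point retraction onto $N$ defined in a tubular neighbourhood, and write $P_q:=d\pi_N(q)\colon\R^m\to T_qN$ for the orthogonal projection. For the weak limit, define $Y^j_k(x):=P_{u_k(x)}Y^j(x)$; on $\spt Y^j\Subset M\setminus\Sigma$ we have $u_k\to u_0$ in $C^\infty$, so $Y^j_k\to Y^j$ smoothly and
\[
D^2E(u_k)(Y^j_k,Y^\ell_k)\longrightarrow D^2E(u_0)(Y^j,Y^\ell).
\]
For each bubble, working in a conformal chart around $p_{\sigma(i)}$ and using the rescaling $\Phi_{i,k}(y)=x_{i,k}+\lambda_{i,k}y$, set
\[
Z^{i,j}_k(x):=P_{u_k(x)}\bigl(Z^{i,j}\!\circ\Phi_{i,k}^{-1}(x)\bigr),
\]
extended by zero. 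Since $Z^{i,j}$ has compact support, $\spt Z^{i,j}_k\subset B_{C\lambda_{i,k}}(x_{i,k})$. Two-dimensional conformal invariance of $E$ transfers the quadratic form verbatim: $D^2E(u_k)$ evaluated on $Z^{i,j}_k$ equals $D^2E(u_k\circ\Phi_{i,k})$ evaluated on $Z^{i,j}$ (up to an error from the non-flat metric on $M$, which is $O(\lambda_{i,k}^2)$). Smooth convergence $u_k\circ\Phi_{i,k}\to\omega_i$ on $\spt Z^{i,j}$ then yields
\[
D^2E(u_k)(Z^{i,j}_k,Z^{i,\ell}_k)\longrightarrow D^2E(\omega_i)(Z^{i,j},Z^{i,\ell}).
\]

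\textbf{Step 3: disjoint supports and conclusion.} For $k$ large, the supports of the $Y^j_k$ lie in a fixed compact set disjoint from all $p\in\Sigma$, while the supports of the $Z^{i,j}_k$ shrink to $p_{\sigma(i)}$; hence the two families are supported disjointly, killing cross terms in $D^2E(u_k)$. Within a fixed singular point $p_s$, distinct bubbles $i\neq i'$ satisfy the standard separation dichotomy (either $\lambda_{i,k}/\lambda_{i',k}+\lambda_{i',k}/\lambda_{i,k}\to\infty$, or this ratio stays bounded but $\lambda_{i,k}^{-1}|x_{i,k}-x_{i',k}|\to\infty$); in either case, since the $Z^{i,j}$ are compactly supported, $\spt Z^{i,j}_k\cap\spt Z^{i',\ell}_k=\emptyset$ eventually. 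Therefore the full Gram matrix of $D^2E(u_k)$ on $\{Y^j_k\}\cup\{Z^{i,j}_k\}$ becomes block-diagonal in the limit, with each diagonal block converging to the negative-definite Gram matrix of the corresponding limit object. Disjointness of supports (together with smooth convergence on each piece) also ensures the collection stays linearly independent. Hence for large $k$ we have a $D$-dimensional negative subspace for $D^2E(u_k)$, proving $\liminf_k\Ind(u_k)\geq D$.

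\textbf{Main obstacle.} The delicate point is the neck region: the transplanted test fields on the bubbles and on the weak limit are supported \emph{outside} the necks, so their quadratic forms converge cleanly, but one must rule out the possibility that two bubbles at the same point with comparable scales produce overlapping supports. This is exactly where the bubble-separation dichotomy above is required; once it is invoked, the proof reduces to the smooth convergences (i) and (ii), combined with the rescaling identity from conformal invariance.
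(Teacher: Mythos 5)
Your proposal follows essentially the same route as the paper's proof: reduce to test fields compactly supported away from the concentration points via a capacity/logarithmic-cutoff argument, transplant to $u_k^*TN$ by projecting with $P(u_k)$, use the conformal rescaling together with the conformal invariance of $D^2E$ on the bubble pieces, and observe that the transplanted fields have pairwise disjoint support for $k$ large so that the Gram matrix is block-diagonal in the limit. One small imprecision: you arrange $\spt Z^{i,j}\Subset\R^2\simeq S^2\setminus\{\infty\}$, which removes the neck end, but you should also cut $Z^{i,j}$ off away from $\Sigma_i$, the concentration set of higher-level bubbles sitting on $\omega_i$, since the convergence $u_k\circ\Phi_{i,k}\to\omega_i$ only holds in $C^\infty_{\mathrm{loc}}(\R^2\setminus\Sigma_i)$; the paper phrases this as ``vanishes in a neighbourhood of every energy concentration point of the bubble tree.'' With that fix your argument is complete.
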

Note that this result relies on a rather standard capacity argument and has already been observed in \cite{stern}. We include it here for the sake of completeness.

The much harder result is the upper bound for the index along the sequence $u_k$. The main result of our paper is the following.
\begin{theorem} \label{main2}
Let $u_k$, $u_0$ and $\omega_i$, $1\leq i \leq L$ be as in Theorem \ref{main1}. Then we have the estimate
\[
\limsup_{k\to \infty} (\Ind(u_k) +\Nul(u_k)) \leq \Ind(u_0)+\Nul(u_0)+\sum_{i=1}^L (\Ind(\omega_i)+\Nul(\omega_i)).
\]
\end{theorem}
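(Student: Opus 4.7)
The plan is to argue by contradiction. Set $I := \Ind(u_0) + \Nul(u_0) + \sum_{i=1}^L (\Ind(\omega_i) + \Nul(\omega_i))$ and suppose, along a subsequence (not relabeled), that $\Ind(u_k) + \Nul(u_k) \ge I+1$. Then there exist eigensections $\phi_k^1, \ldots, \phi_k^{I+1} \in W^{1,2}(M, u_k^* TN)$ of the Jacobi operator $L_{u_k} := -\Delta - A^2_{u_k}$ with non-positive eigenvalues $\lambda_k^1 \leq \cdots \leq \lambda_k^{I+1} \leq 0$, which may be taken to be orthonormal with respect to a carefully chosen bilinear form $B_k(\cdot, \cdot)$. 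As advertised in the abstract, the central novelty is that $B_k$ depends on $k$: a natural candidate is an energy-weighted inner product such as
\[
B_k(X,Y) = \int_M \bigl(|\nabla u_k|^2 + \sigma_k^2\bigr) \langle X, Y\rangle\, dv_g
\]
(possibly combined with the Dirichlet pairing), tuned so that the $B_k$-mass distribution mirrors the energy density $|\nabla u_k|^2\,dv_g$, which by energy quantization concentrates only on the weak limit scale and on the finitely many bubble scales.

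The strategy is to pass to the limit of the eigensections on three separate scales. On the region where $u_k \to u_0$ smoothly, weak $W^{1,2}_{\mathrm{loc}}$ compactness produces limits $\phi_0^j \in W^{1,2}(M, u_0^* TN)$. Around each concentration point $x_0^i$ one rescales at the bubble scale $r_k^i \to 0$ and extracts, along a further subsequence, weak limits $\psi_i^j \in W^{1,2}(S^2, \omega_i^* TN)$. Passing to the limit in the eigensection equation and removing the isolated singular points by a standard capacity argument (since all limits lie in $W^{1,2}$) shows that the limits are distributional eigensections of the respective Jacobi operators $L_{u_0}$ and $L_{\omega_i}$ with non-positive eigenvalues. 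Each tuple $(\phi_0^j, \psi_1^j, \ldots, \psi_L^j)$ is therefore a candidate non-positive direction for the block-diagonal index form $D^2E(u_0) \oplus \bigoplus_i D^2E(\omega_i)$, whose total $\Ind + \Nul$ equals $I$ by the very definition of $I$. If these $I+1$ tuples are linearly independent, we arrive at the desired contradiction.

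The main obstacle, and the reason for the variable bilinear form, is the analysis of the intermediate neck regions: eigensections could in principle concentrate all of their $B_k$-mass on the annular neck regions between the base scale and each bubble scale, causing the limiting tuples either to vanish or to become linearly dependent. The proposed remedy is to show that for every $\delta > 0$ and $k$ sufficiently large, the $B_k$-mass of each normalised $\phi_k^j$ on the union of the necks is at most $\delta$. The line of attack is to test the eigensection equation against logarithmic cut-offs on dyadic sub-annuli of each neck, estimate the $\langle A^2_{u_k}(X), X\rangle$ term via the no-neck energy decay, and then bound the resulting Dirichlet norm against $B_k$. Since $B_k$ is weighted by the energy density, which is small throughout the neck by the no-neck property, this comparison forces the $B_k$-mass on the necks to vanish in the limit, provided the eigenvalues $\lambda_k^j$ stay uniformly bounded, a routine consequence of the uniform energy bound and standard Jacobi field estimates.

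Given this no-concentration-on-necks statement, linear independence of the limiting tuples is obtained by passing to the limit in the Gram identities $B_k(\phi_k^j, \phi_k^\ell) = \delta_{j\ell}$: since the entire $B_k$-mass is accounted for on the weak-limit and bubble pieces, the limit Gram matrix with respect to the naturally induced bilinear form on $W^{1,2}(M, u_0^*TN) \oplus \bigoplus_i W^{1,2}(S^2, \omega_i^*TN)$ remains the identity. Combined with the non-positivity of the limit eigenvalues, this produces $I+1$ independent non-positive directions for the block index form, contradicting the definition of $I$ and completing the proof.
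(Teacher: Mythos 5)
Your overall architecture matches the paper's: a $k$-dependent bilinear form in which the eigensections are normalised, a three-scale analysis (base, bubbles, necks), and a Gram-matrix argument to show linear independence is preserved in the limit. The paper phrases the conclusion directly rather than by contradiction, but that is cosmetic. The actual bilinear form used in the paper is
\[
\langle\langle X,Y\rangle\rangle_{u_k} = \int_M \bigl(\langle X,Y\rangle + \langle A_{u_k}^2(X),Y\rangle\bigr)\,dv_g,
\]
which is comparable to your $\int (|\nabla u_k|^2 + \sigma_k^2)|X|^2$ with $\sigma_k\equiv 1$ once a suitable isometric embedding of $N$ is fixed (Lemma \ref{lem.choiceoftheembedding}). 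Two consequences of that specific choice that your sketch does not make precise: first, the second summand is \emph{exactly} the term appearing in $D^2E$, so the normalisation gives $\lambda_k\ge -1$ and $\|X_k\|_{W^{1,2}}\le\sqrt2$ by a one-line computation rather than by invoking ``standard Jacobi estimates''; second, the $L^2$-piece must not be allowed to degenerate (your phrasing ``tuned so that $B_k$-mass mirrors the energy density'' suggests $\sigma_k\to 0$, which fails whenever $u_0$ is a constant map, since then the weight $|\nabla u_0|^2$ vanishes on all of $M$ and the limiting Gram matrix on the base block is not defined).

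The genuine gap is in the neck analysis, which is the technical heart of the paper. You propose to test against logarithmic cut-offs on dyadic sub-annuli and control the cut-off error by the smallness of the energy density on the neck. But after passing to cylinder coordinates $[-L_k,L_k]\times T^1$ the cut-off error is of the form $\int |\eta'|^2|\tilde X_k|^2\,dt\,d\theta$, a \emph{flat} $L^2$ norm with no energy weight, supported on strips where $|\nabla (u_k\circ n_k)|$ is exponentially small. The $B_k$-normalisation only controls the energy-weighted mass, so it gives no bound whatsoever on $\int |\eta'|^2|\tilde X_k|^2$: the section could blow up in $L^\infty$ on the middle of the neck while keeping $B_k$-mass equal to one. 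The paper closes this precisely by establishing a uniform $L^\infty$-bound for $\tilde X_k=X_k\circ n_k^i$ on the cylinder before the cut-off step. That bound is not free: it requires the tangential-derivative estimate and the maximum-principle argument of Lemma \ref{lem.tangential estimate}, fed by the quantitative no-neck decay $|\nabla v|\lesssim e^{\frac{1}{10}(|t|-L)}$ of Corollary \ref{cor.quantitative no neck} and by the already-established convergence of $X_k$ at the two ends of the cylinder. Without an input of this kind, the cut-off estimate does not close, and the no-concentration-on-necks claim remains unproved. So your proposal correctly identifies the right decomposition and the right role of the $k$-dependent form, but is missing the key ingredient — the neck $L^\infty$-estimate — that actually forces the neck mass to vanish.
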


Related results also in the context of harmonic maps \cite{yin1,yin2} and critical points of conformally invariant variational problems \cite{dalio} have been obtained recently. See Remark \ref{conflict} for further comments on these works.

We note that a similar result has been obtained in the very influential work of Chodosh and Mantoulidis \cite{chodoshmantoulidis} on the Allen-Cahn approximation of minimal surfaces and by Marques and Neves \cite{marquesneves} in the context of minimal surfaces.

We also remark that in this Theorem we can not get rid of the nullity in general, since there is apriori no reason which excludes that we can have a sequence of eigenvalues $\lambda_k<0$ converging to zero. 

Let us now comment on the main ingredients for the proofs of our Theorems. Note that for both Theorems we have to carefully study the behaviour of the second variation resp. its eigenfunctions on the various regions on $M$ corresponding to the situations where we have strong convergence to $u_0$, strong convergence to the bubbles $\omega_i$ or the neck region. 

In order to show Theorem \ref{main1} we construct for given $X_0\in W^{1,2}(M,u_0^*TN)$ resp. $Z_i\in W^{1,2}(S^2,\omega_i^*TN)$ with $1\leq i \leq L$, a sequence $X_k\in W^{1,2}(M,u_k^*TN)$ such that 
\[
D^2E(u_k)(X_k)\to D^2E(u_0)(X_0)+\sum_{i=1}^L D^2E(\omega_i) (Z_i)
\]
from which the claim then easily follows. 

The argument for Theorem \ref{main2} is much more involved since we have to understand the convergence of eigenfunctions of $D^2E(u_k)$ in great detail. In order to do this we first define a family of bilinear forms associated and varying along the sequence $u_k$. More precisely, we let
\[
\langle \langle X,Y \rangle \rangle_{u_k} = \int_M (\langle X, Y \rangle +  \langle A^2_{u_k}(X),Y \rangle) \, dv_g.
\]
We show that this bilinear form is actually a scalar product once a suitable isometric immersion $N\hookrightarrow \R^m$ has been fixed, which we do from then on. It then follows from the fact that the index and the nullity are independent of the underlying vector space and the scalar product, that we can diagonalize $D^2E(u_k)$ with respect to the scalar product $\langle \langle \cdot,\cdot \rangle \rangle_{u_k}$. In order to study the index and the nullity along the sequence $u_k$ we then study sequences of eigenfunctions $X_k\in W^{1,2}(M,u_k^*TN)$ corresponding to an arbitrary eigenvalue $\lambda_k$, i.e. solutions of the linear PDE
\[
 P(u_k)(-\Delta X_k - A^2_{u_k}(X_k))= \lambda_k (X_k+ A^2_{u_k}(X_k)) ,
\]
where $P(u_k)$ denotes the orthogonal projection of $\R^m$ onto $T_{u_k}N$. Without loss of generality we assume the $X_k$ to be normalized in the sense that $\langle \langle X_k,X_k \rangle \rangle_{u_k}=1$.

The reason why we choose $\langle \langle \cdot,\cdot \rangle \rangle_{u_k}$ as our underlying scalar product has two main reasons. The first one is that by multiplying the equation for $X_k$ with $X_k$ itself and using our normalization one directly obtains a uniform lower bound on the eigenvalues, namely
\[
\lambda_k\geq -1
\]
and a uniform upper bound for the $W^{1,2}$-norm of $X_k$, namely
\[
\|X_k\|_{W^{1,2}(M,u_k^* TN)} \leq \sqrt{2}.
\]
These estimates directly imply good convergence properties of the $X_k$ in the region where $u_k$ strongly converges to $u_0$. The second reason for the choice of the scalar product is the conformal invariance of the second term in its definition. This fact turns out to be crucial when studying the convergence of the $X_k$ in the bubble region.

Finally, we show that the energy of the $X_k$ converges to zero in the neck region. In order to do this we adjust and simplify some of the arguments of \cite{lin98,qing97,sacks81}. To conclude, we are able to show a variant of the energy identity for the sequence $X_k$ which together with the fact that we can show that orthogonality is preserved along the sequence, implies the upper bound which we claim in Theorem \ref{main2}. 
\medskip

In the following we give an outline of our paper.
\medskip

In section $2$ we recall the necessary results about the bubble convergence of a sequence of harmonic maps with uniformly bounded energy.
\medskip

Section $3$ contains the definition of the bilinear form $\langle \langle \cdot,\cdot \rangle \rangle_{u}$ and the proof of the fact that we can choose an isometric immersion such that it is actually a scalar product.
\medskip

In section $4$ we show Theorem \ref{main1}.
\medskip

In the most important section $5$ we prove Theorem \ref{main2} by a careful study of the convergence properties of the sequence $X_k$ on the various regions in $M$. As a byproduct of our analysis on the neck region, we also obtain a shorter proof of the no-neck property for sequences of harmonic maps.
\medskip

In the short section $6$ we sketch how to modify our arguments to handle the case of sequences of critical points of any two-dimensional conformally invariant variational integrand. Thus we recover the main result of \cite{dalio} using quite different techniques.
\medskip

Finally, in the appendix, we recall that the index and nullity are independent of the chosen vector space and scalar product. 

\bigskip
\begin{remark}\label{conflict}
While we were finishing writing our paper we became aware of a Preprint on the arXiv by Francesca Da Lio, Matilde Gianocca and Tristan Rivi\`ere \cite{dalio}, in which the authors obtain a related result as our Theorem \ref{main2} for sequences of critical points of general conformally invariant quadratic variational problems (compare with section $6$ of the present paper). The main difference between the two results is that in \cite{dalio} the authors assume less regularity on the data (such as the target manifold and the differential forms involved). 
The two results were obtained independently with a rather different proof. We believe that our argument, especially because of the choice of the globally defined bilinear form $\langle \langle \cdot,\cdot \rangle \rangle_{u}$, will be of independent interest. We want to highlight again that it is because of this choice that we get uniform lower bounds for the eigenvalues and uniform upper bounds for the $W^{1,2}$-norm of the eigenfunctions basically for free.
\medskip

Only recently, we also became aware of two papers of Hao Yin \cite{yin1,yin2} who also proved Theorem \ref{main2} for sequences of harmonic maps. His argument relies on a rather delicate analysis of the neck region and is also of independent interest. His analysis was focused on the behavior of harmonic maps in the neck region and in contrast, we provide a self-contained analysis on all three regions arising via the bubbling process on $M$.
\end{remark}

\section{Review of the bubbling analysis for sequences of harmonic maps}
In this section we briefly recall the general bubbling behaviour of sequences of harmonic maps $u_k\in W^{1,2}(M,N)$, $k\in \N$, between a Riemann surface $M$ and a Riemannian manifold $N$ with uniformly bounded Dirichlet energy $E(u_k)\leq C$. Due to the regularity result of H\'elein (see e.g. \cite{helein02}) we can and will assume that $u\in C^\infty(M,N)$. We would like to emphasise that ``being harmonic'' does not depend on the isometric embedding chosen for $N$, see e.g. \cite[Lemma 4.1.2]{helein02}.

The study of sequences of harmonic maps $(u_k)\in W^{1,2}(M,N)$ with uniformly bounded energy was initiated in the paper of Sacks and Uhlenbeck \cite{sacks81} in which they showed that there exists a weak limit $u_0\in W^{1,2}(M,N)$ such that $u_k$ also converges locally strongly (always after the selection of appropriate subsequences) away from an at most finite set $\Sigma_0 \subset M$. The elements in $\Sigma_0$ are characterized by the fact that the Dirichlet energy locally concentrates around these points. After showing this result, Sacks and Uhlenbeck were also able to perform a blow-up around the finitely many points in $\Sigma_0$ and in the limit they found non-trivial harmonic maps $\omega_i:S^2\to N$, the so called bubbles. Later on this argument was extended by Parker \cite{parker96} and he was able to find all possible energy concentration points and all possible bubbles (a modified version of this result in a more general setting, including the case of sequences of critical points of conformally invariant variational problems, can be found in the paper of Sharp and the second author of this paper \cite{lammsharp}).  In summary, the authors showed the existence of a weak limit $u_0:M\to N$, an at most finite set of energy concentration points $\Sigma_0$ of the sequence $u_k$, at most finitely many bubbles $\omega_i:S^2\to N$ with $1\leq i \leq L$ and at most finite sets $\Sigma_i\subset \C$ of energy concentration points on the bubbles $\omega_i$. Note that here and in the following we freely identify the base $S^2$ of a bubble $\omega_i$ with $\C$.
Additionally, for every $1\leq i \leq L$, there exist conformal dilations $m_k^i(z)=p_k^i + r_k^i z$ with $p_k^i\to p\in \Sigma_0$ and $r_k^i\to 0$ so that   
\begin{align*}
	u_k \to u_0 &\text{ in } C^2_{loc}(M\setminus \Sigma_0, N)\\
	u_k\circ m_k^i \to \omega_i &\text{ in } C^2_{loc}(\C\setminus \Sigma_i, N)
 \end{align*}
 It was also shown that the relation
 \begin{align*}
 \max \{  \frac{r_k^{i}}{r_k^{j}},\frac{r_k^{j}}{r_k^{i}},\frac{\dist(p_k^{i},p_k^{j})}{r_k^{i}+r_k^{j}} \} &\to \infty, \ \ \ \forall \ \ 1\leq i,j \leq L, \ \  i\not= j, 
\end{align*}
as $k\to \infty$, holds true.

Moreover, there exists a conformal parametrization $n_k^{i}(t+i\theta) = p_k^i + \varrho_k e^{-(t+i\theta)}$ for $[-L_k^i,L_k^i]\times T^1$ (here $T^1=S^1$ with the flat metric) with $L_k^i=\frac12 \log(\frac{s_k^i}{r_k^i})$ and $\varrho^i_k=\sqrt{s_k^i r_k^i}$ (where $s^i_k\to 0$ is a sequence so that $\frac{s_k^i}{r_k^i}\to \infty$ as $k\to \infty$), related to the neck that attaches the bubble $\omega_i$ either to the base $u_0$ or to another bubble $\omega_j$, so that 
\begin{align*}
 u_k\circ n_k^{ij} \to 0 &\text{ in } W^{1,2}_{loc} (\R\times T^1).
\end{align*}
This last fact is known as the so called energy identity. Note that it is also true that the oscillation of $u_k\circ n_k^{ij}$ converges to zero and this is known as the no-neck property. We provide an alternative and short proof of this fact in section \ref{section5} as a consequence of our general neck analysis.

In the following we will use these convergence properties and our notations. We will also say that the sequence $u_k$ bubble-converges to $u_0 \cup_{i=1}^L \omega_i$.

\section{Definition of the scalar product and the index}
In this section we define scalar products which are well adjusted to the study of the index and the nullity of a harmonic map $u:M\to N$. We start with the following bilinear forms.
\begin{definition}
For a map $u\in C^1(M,N)$ we define two bilinear forms on $u^*TN$ by
\begin{align*}
\langle \langle X,Y \rangle \rangle_u &:= \int_{M} (\langle X, Y \rangle +  \langle A^2_u(X),Y \rangle) \, dv_g, \ \ \ \text{resp.}\\
\langle X,Y  \rangle_u &:= \int_{M} \langle A^2_u(X),Y \rangle \, dv_g,
\end{align*}
where $X,Y\in C^0(M,u^*TN)$ and $\langle A^2_u(X),Y\rangle =\langle A_u(\nabla u,\nabla u), A_u(X,Y)\rangle$ is the term which arises in the second variation of the Dirichlet energy.
\end{definition}
One key advantage of the second scalar product is it's invariance under conformal transformations of $M$. More precisely, if $m:M\to M$ is a conformal transformation, then 
\[
A^2_{u\circ m}(X\circ m) =\frac12|\nabla m|^2 (A^2_u(X))\circ m.
\]
Thus, this part of the bilinear form is well-adapted to the later study of the index for sequences of harmonic maps on the domains where the maps form bubbles.

In the next Lemma we show that we can choose an isometric immersion $N\hookrightarrow \R^m$ so that the two bilinear forms defined above actually become scalar products. In the following we will always work with this specific isometric immersion.
\begin{lemma}\label{lem.choiceoftheembedding}
Let $(N,h)$ be a smooth and closed Riemannian manifold. 
Then there exists an isometric immersion $i:N\to \R^n$, so that for some constant $0<c<\infty$ we have
\[
\langle A(v,v),A(w,w) \rangle \geq c|v|^2|w|^2
\]
for every $p\in N$ and every $v,w\in T_pN$.
\end{lemma}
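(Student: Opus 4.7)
The plan is to combine a Nash isometric embedding with a carefully chosen post-composition that produces a uniformly positive contribution to the second fundamental form. By Nash's theorem there is a smooth isometric embedding $i_0:(N,h)\to\R^m$ for some $m$; I will post-compose it with a ``toroidal'' isometric immersion $\Psi_\lambda:\R^m\to\R^{2M}$ depending on a parameter $\lambda>0$. Concretely, fix a finite family $\{a_j\}_{j=1}^M\subset\R^m$ with $\sum_j a_j a_j^T = I_m$ (a Parseval frame) and set
\[
\Psi_\lambda(x)=\frac{1}{\lambda}\bigl(\cos(\lambda a_1\cdot x),\sin(\lambda a_1\cdot x),\ldots,\cos(\lambda a_M\cdot x),\sin(\lambda a_M\cdot x)\bigr).
\]

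A direct computation gives $|d\Psi_\lambda(v)|^2=\sum_j(a_j\cdot v)^2=|v|^2$, so $\Psi_\lambda$ is isometric, while $d^2\Psi_\lambda(v,w)=-\lambda\sum_j(a_j\cdot v)(a_j\cdot w)\,r_j$, where the $r_j$ are orthonormal ``radial'' unit vectors in each $\R^2$-factor, all orthogonal to $d\Psi_\lambda(\R^m)$. By the chain rule and the isometry of $d\Psi_\lambda$, the composition $i=\Psi_\lambda\circ i_0$ has second fundamental form
\[
A_i(v,w)=-\lambda\sum_j(a_j\cdot di_0(v))(a_j\cdot di_0(w))\,r_j + d\Psi_\lambda(A_{i_0}(v,w)),
\]
whose two summands lie in mutually orthogonal subspaces of the normal bundle. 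Hence
\[
\langle A_i(v,v),A_i(w,w)\rangle=\lambda^2\,T(di_0(v),di_0(w))+\langle A_{i_0}(v,v),A_{i_0}(w,w)\rangle,
\]
where $T(X,Y):=\sum_j(a_j\cdot X)^2(a_j\cdot Y)^2$.

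The remaining step, which I expect to be the most delicate (though still elementary), is to choose the frame so that $T(X,Y)\geq c_0|X|^2|Y|^2$ holds uniformly. A convenient candidate is the appropriately rescaled family $\{e_i\}_{i=1}^m\cup\{(e_i\pm e_j)/\sqrt 2:i<j\}$: one checks that $\sum_j a_j a_j^T$ is a multiple of $I_m$ (after which we rescale to get Parseval), that the symmetric tensors $a_j\otimes a_j$ span $\mathrm{Sym}^2(\R^m)$, and that $X\otimes Y + Y\otimes X$ is nonzero for $X,Y\neq 0$ (an easy index chase). These together give $T(X,Y)>0$ pointwise, and bihomogeneity of $T$ together with compactness of the unit sphere upgrade this to the required uniform lower bound. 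Combined with $\langle A_{i_0}(v,v),A_{i_0}(w,w)\rangle\geq -\|A_{i_0}\|_\infty^2|v|^2|w|^2$ from compactness of $N$, one obtains
\[
\langle A_i(v,v),A_i(w,w)\rangle\geq(\lambda^2 c_0-\|A_{i_0}\|_\infty^2)|v|^2|w|^2,
\]
which is positive for $\lambda$ taken large enough. The conceptual point is that $\Psi_\lambda$ has an isometric differential while its second derivative is of order $\lambda$ and points consistently in the orthonormal radial directions, so for $\lambda$ large it overwhelms any negative contribution from $A_{i_0}$.
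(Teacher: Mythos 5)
Your proposal takes the same basic route as the paper — Nash embedding followed by post-composition with a ``torus'' map $\Psi_\lambda$ whose differential is isometric and whose second derivative is of order $\lambda$ in normal directions — but it catches and repairs a gap in the paper's version of the computation. The paper takes $F_\lambda(x)=\frac{1}{\lambda}(\cos\lambda x^1,\sin\lambda x^1,\dots,\cos\lambda x^n,\sin\lambda x^n)$, i.e.\ the special case of your $\Psi_\lambda$ where the frame $\{a_j\}$ is just the standard basis $\{e_j\}$, and asserts $A_{F_\lambda}=-\lambda F_\lambda\,\mathrm{Id}$, concluding $\langle A_{F_\lambda}(v,v),A_{F_\lambda}(w,w)\rangle=\lambda^2|v|^2|w|^2$. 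But a direct computation gives $A_{F_\lambda}(v,w)=-\lambda\sum_j v^jw^j r_j$ with the $r_j$ orthonormal radial vectors, so in fact
\[
\langle A_{F_\lambda}(v,v),A_{F_\lambda}(w,w)\rangle=\lambda^2\sum_j(v^j)^2(w^j)^2,
\]
which \emph{vanishes} for $v=e_1$, $w=e_2$; with this frame, Cauchy–Schwarz only gives $\sum_j(v^j)^2(w^j)^2\le|v|^2|w|^2$, not $\ge$. Your insistence on choosing a richer Parseval frame is exactly what is needed: with $\{e_i\}\cup\{(e_i\pm e_j)/\sqrt2\}$ (rescaled), the quantity $T(X,Y)=\sum_j(a_j\cdot X)^2(a_j\cdot Y)^2$ is strictly positive for nonzero $X,Y$ (your index chase works: if $X^1\ne0$ forces $Y^1=0$, and $Y^2\ne0$ forces $X^2=0$, then $(e_1+e_2)\cdot X=X^1\ne0$ while $(e_1+e_2)\cdot Y=Y^2\ne0$, a contradiction), and bihomogeneity plus compactness of the unit sphere upgrades this to $T(X,Y)\ge c_0|X|^2|Y|^2$. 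The remainder of your argument — orthogonality of the two summands in $A_i$, the crude lower bound on the $A_{i_0}$-cross term, and taking $\lambda$ large — is correct and mirrors the paper. In short, your proof is not only correct but repairs a genuine oversight in the published argument by replacing the coordinate frame with one rich enough to make the normal quadratic form $T$ uniformly positive.
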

\begin{proof}
Let $j:N\to \R^n$ be an isometric immersion as for example given by the celebrated result of Nash. For $\lambda>1$ we consider the map $F_\lambda:\R^n \to \R^{2n}$ given by
\[
F_\lambda(x)= \left( \frac{\cos(\lambda x^1)}{\lambda} ,\frac{\sin(\lambda x^1)}{\lambda},\dots, \frac{cos(\lambda x^n)}{\lambda}, \frac{\sin(\lambda x^n)}{\lambda} \right).
\]
Note that for every $\lambda \geq 1$ we have
\[
F_\lambda^*\delta_{\R^{2n}} =\delta_{\R^n}.
\]
Next we let $f_\lambda:=F_\lambda \circ j:N\to \R^{2m}$ and we note that the second fundamental form of this map is given by
\begin{align*}
 A_{f_\lambda} (X,Y)=A_{F_\lambda}(j_\sharp X,j_\sharp Y) +(F_\lambda)_\sharp A_j(X,Y).   
\end{align*}
This can be seen by taking a geodesic arc $\gamma:(-\varepsilon,\varepsilon)\to N$ which is parametrized by arc-length, and noting that
\begin{align*}
(f_\lambda\circ \gamma)''(t) &=(df_\lambda(\gamma(t)) \cdot \gamma'(t))'\\
&=D^2F_\lambda(j(\gamma(t)))(j_\sharp \gamma'(t),j_\sharp \gamma'(t))+DF_\lambda (j(\gamma(t))) (j(\gamma(t)))''\\
&= A_{F_\lambda}(j_\sharp \gamma'(t),j_\sharp \gamma'(t))+(F_\lambda)_\sharp (A_j(\gamma'(t),\gamma'(t)).
\end{align*}
Since $A_{F_\lambda}$ and $(F_\lambda)_\sharp A_j$ are orthogonal and
\[
A_{F_\lambda} =-\lambda F_\lambda Id
\]
we obtain
\begin{align*}
\langle A_{f_\lambda(x)} (v,v), A_{f_\lambda(x)} (w,w) \rangle =    \lambda^2  |v|^2|w|^2 +\langle A_j(v,v),A_j(w,w)\rangle.
\end{align*}
Hence we can choose $\lambda$ sufficiently large so that the claim holds for the fixed isometric immersion $i:=\frac{1}{\sqrt{2}} (id_{\R^n},f_\lambda):N\to \R^{3n}$.
\end{proof}
The bilinear form associated to the second variation of the energy that determines the index of a harmonic map is given by
\begin{equation}\label{eq.bilinear form}
    D^2E(u)(X,X)= \int_{M} (|\nabla X|^2 - \langle A^2_u(X),X) \rangle\, dv_g\,.
\end{equation}
Due to the general index considerations in appendix \ref{sec.appA}, i.e. Sylvester's law of inertia, we can choose the scalar product freely. For our purposes we choose $\langle \langle \cdot, \cdot \rangle\rangle_u$. As an immediate consequence of appendix \ref{sec.appA} we have the Lemma

\begin{lemma}\label{lem.EquationEigenvectors}
Let $u:M \to N$ be a harmonic map. Then the following two statements are equivalent:
\begin{enumerate}
    \item[(i)] $X$ is an eigenvector of $D^2E(u)$ to the eigenvalue $\lambda$;
    \item[(ii)] $X$ solves 
   \begin{equation}\label{eq.EquationEigenvector1}
    P(u)(-\Delta X - A^2_u(X))= \lambda (X+ A^2_u(X)) \,.
\end{equation}
\end{enumerate}
Moreover, the equation \eqref{eq.EquationEigenvector1} can be rewritten in the form
\begin{equation}\label{eq.EquationEigenvector2}
   -\Delta X + B_u^i \partial_i X + C_uX = \lambda X \,,
\end{equation}
where the coefficients $B_u^i$ and $C_u$ are determined by geometric quantities of $N$ and evaluated at $u$. In particular, they satisfy the bounds
\[|B_u^i|\lesssim |\nabla u| \text{ and } |C_u|\lesssim (1+|\lambda|) |\nabla u|^2\,.
\]
\end{lemma}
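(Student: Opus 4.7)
The plan is to handle the two assertions in sequence: the equivalence (i)$\Leftrightarrow$(ii) will follow from a standard weak-to-strong form argument (integration by parts together with projection onto $T_uN$), while the reformulation \eqref{eq.EquationEigenvector2} will follow from a direct calculation using the tangency of $X$ along $u$ and the harmonic map equation for $u$. No part of this is especially deep; the work is largely bookkeeping.

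For the equivalence, combining the definition of an eigenvector with respect to the scalar product $\langle\langle\cdot,\cdot\rangle\rangle_u$ with the content of the appendix recasts (i) as the statement that
\[
\int_M\bigl(\langle\nabla X,\nabla Y\rangle - \langle A^2_u(X),Y\rangle\bigr)\,dv_g \;=\; \lambda\int_M\bigl(\langle X,Y\rangle + \langle A^2_u(X),Y\rangle\bigr)\,dv_g
\]
for every $Y\in W^{1,2}(M,u^*TN)$. Since $M$ is closed, integration by parts turns this into the identity obtained by testing $-\Delta X - A^2_u(X)$ against $Y$ on the left and $X+A^2_u(X)$ against $Y$ on the right. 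Because $Y$ is required only to be tangential and $X+A^2_u(X)$ is already tangential, this identity is equivalent to its pointwise form after applying $P(u)$ to the non-tangential quantity on the left, which is precisely \eqref{eq.EquationEigenvector1}.

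For the rewriting \eqref{eq.EquationEigenvector2}, I would write $P(u) = I - P^\perp(u)$ and use that $A^2_u(X)\in T_uN$ to rearrange \eqref{eq.EquationEigenvector1} into
\[
-\Delta X \;=\; \lambda X + (1+\lambda)A^2_u(X) - P^\perp(u)(\Delta X).
\]
To compute the normal component $P^\perp(u)(\Delta X)$ I would fix a local orthonormal frame $\{\nu_\alpha\}$ of the normal bundle of $N\hookrightarrow\R^n$ and differentiate the pointwise tangency relation $\langle X,\nu_\alpha(u)\rangle=0$ twice in $M$. The first differentiation gives $\langle \partial_i X,\nu_\alpha(u)\rangle = -\langle X, d\nu_\alpha(\partial_i u)\rangle$, and differentiating once more and summing over $i$ produces a term linear in $\nabla X$ with coefficients controlled by $|\nabla u|$, together with a zero-order term in $X$ involving $d^2\nu_\alpha(\partial_i u,\partial_i u)$ and $d\nu_\alpha(\Delta u)$.

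The only genuine ingredient -- and thus the closest thing to an obstacle -- is that the appearance of $\Delta u$ threatens a second-order term in derivatives of $u$; here the harmonicity of $u$ rescues us, since $\Delta u = -A(u)(\nabla u,\nabla u)$ is pointwise bounded by $|\nabla u|^2$. Combined with $|A^2_u(X)|\lesssim |\nabla u|^2|X|$ and after absorbing the factor $(1+\lambda)$ into the zero-order coefficient, this yields \eqref{eq.EquationEigenvector2} with $|B^i_u|\lesssim |\nabla u|$ and $|C_u|\lesssim (1+|\lambda|)|\nabla u|^2$, as claimed. The remaining difficulty is simply the careful tracking of the $\lambda$-dependence and of the geometric coefficients coming from $N$.
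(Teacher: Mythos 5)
Your proof is correct and follows essentially the same route as the paper: the equivalence (i)$\Leftrightarrow$(ii) is deferred to the abstract spectral discussion in the appendix, and the reformulation \eqref{eq.EquationEigenvector2} is obtained by controlling the normal component of $\Delta X$ via the pointwise tangency of $X$ together with the harmonic map equation, which is exactly what eliminates the second-order term in $u$. The only cosmetic difference is that the paper computes $\Delta X = \Delta(P(u)X) = P(u)\Delta X + 2\,dP(u)\nabla u\cdot\nabla X + \bigl(dP(u)(A(u)(\nabla u,\nabla u)) + D^2P(u)(\nabla u,\nabla u)\bigr)X$ directly via the product rule, whereas you extract $P^\perp(u)\Delta X$ by twice differentiating $\langle X,\nu_\alpha(u)\rangle = 0$ in a local normal frame; unwinding the frame computation reproduces the paper's identity term-for-term, and in both cases the bounds $|B^i_u|\lesssim|\nabla u|$ and $|C_u|\lesssim(1+|\lambda|)|\nabla u|^2$ follow by inspection.
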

\begin{proof}
Note that by the above discussion we only have to show that $(ii)$ implies \eqref{eq.EquationEigenvector2}. For this we note that
\begin{align*}
    \Delta X &= \Delta (P(u)X) = P(u)\Delta X + 2 \nabla P(u)\,\nabla X +(\Delta P(u))X \\&= P(u)\Delta X + 2 dP(u)\nabla u\,\nabla X+ \left(dP(u)(A(u)(\nabla u, \nabla u) + D^2P(u)(\nabla u, \nabla u) \right) X ,
\end{align*}
where we used that $X\in u^* TN$ and that $\Delta u = (\Delta u)^\perp$. Thus the claim follows.
\end{proof}
Based on the considerations in the appendix, we can use the following considerations for the definition of the index of a harmonic map $u\colon M \to N$.
\begin{definition}
Let $u:M\to N$ be harmonic. Then the index is defined by
\begin{align*}
\Ind(u) =\dim \{ X\in H^1(u^* TN): \, &P(u)(-\Delta X-A^2_u(X))=\lambda(X+ A^2_u(X))\ \ \ \text{s.t.} \\ &\lambda <0\}
\end{align*}
and the nullity by
\[
\Nul(u) =\dim \{ X\in H^1(u^* TN):\, P(u)(-\Delta X-A^2_u(X))= 0\}.
\]
\end{definition}
It again follows from the considerations in the Appendix that these objects are well-defined.

\section{Index lower bound}\label{lower bound}

In this section we prove Theorem \ref{main1} and we start 
with a technical Lemma on the density of certain vectorfields. 
\begin{lemma}
Let $u:M\to N$ be harmonic and let $P=\{p_1,\ldots,p_l\} \subset M$ be a finite set of points. Then $C_P(M,u^* TN):=\{ X\in C^\infty (M,u^* TN):\, \spt(X) \subset \subset M\backslash P\}$ is dense in $H^1\cap L^\infty(M,u^* TN)$. This implies that for every $X\in H^1\cap L^\infty (M,u^* TN)$ there exists a sequence $X_k\in C_P(M,u^* TN)$ so that 
\[
\lim_{k\to \infty} D^2E(u)(X_k) =D^2E(u)(X).
\]
\end{lemma}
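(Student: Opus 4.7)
The proof is a standard capacity-cutoff argument that exploits the vanishing $H^1$-capacity of points in dimension two. The plan has two steps, preceded by a smoothing reduction.

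First, I would reduce to the case of smooth $X$. Viewing $X$ as an $\R^m$-valued $H^1 \cap L^\infty$ function, mollify in conformal coordinate charts together with a partition of unity to obtain $\tilde X_\delta \in C^\infty(M,\R^m)$ with $\tilde X_\delta \to X$ in $H^1$ and $\|\tilde X_\delta\|_\infty \lesssim \|X\|_\infty$, then project back to the bundle by setting $X_\delta := P(u)\tilde X_\delta$. Since $u$, and hence $P\circ u$, is smooth, $X_\delta \in C^\infty(M,u^*TN)$, $X_\delta \to X$ in $H^1$, and the $L^\infty$ bound is preserved. It thus suffices to approximate smooth $X$ by elements of $C_P(M,u^*TN)$.

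Second, around each $p_i \in P$ fix a conformal chart on a ball on which the $B_1(p_i)$ are pairwise disjoint, and define the logarithmic cutoff
\[
\eta_\varepsilon^{(i)}(x) := \min\Bigl\{1,\, \Bigl[\tfrac{\log(\abs{x-p_i}/\varepsilon^2)}{\log(1/\varepsilon)}\Bigr]_+\Bigr\},
\]
which vanishes on $B_{\varepsilon^2}(p_i)$, equals $1$ outside $B_\varepsilon(p_i)$, takes values in $[0,1]$, and, by an explicit computation in polar coordinates, satisfies $\int_M \abs{\nabla \eta_\varepsilon^{(i)}}^2\, dv_g \lesssim 1/\log(1/\varepsilon) \to 0$ as $\varepsilon \to 0$. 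Set $X_{\delta,\varepsilon} := \bigl(\prod_i \eta_\varepsilon^{(i)}\bigr) X_\delta$. Since multiplication by a scalar preserves the fiberwise constraint $X(x) \in T_{u(x)}N$, we have $X_{\delta,\varepsilon} \in C_P(M, u^*TN)$. Splitting
\[
\nabla X_{\delta,\varepsilon} = \bigl(\textstyle\prod_i \eta_\varepsilon^{(i)}\bigr)\nabla X_\delta + X_\delta \otimes \nabla\bigl(\textstyle\prod_i \eta_\varepsilon^{(i)}\bigr),
\]
dominated convergence handles the first summand while the second is bounded by $\|X_\delta\|_\infty \bigl\|\nabla\bigl(\prod_i \eta_\varepsilon^{(i)}\bigr)\bigr\|_{L^2} \to 0$. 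Hence $X_{\delta,\varepsilon} \to X_\delta$ in $H^1$, and a diagonal extraction yields a sequence $X_k \in C_P(M, u^*TN)$ with $X_k \to X$ in $H^1$ and $\|X_k\|_\infty \lesssim \|X\|_\infty$.

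For the convergence $D^2E(u)(X_k) \to D^2E(u)(X)$, the gradient term converges directly from $H^1$-convergence. For the curvature term, since $u$ is smooth and $M$ is compact, $\abs{\nabla u}^2 \in L^\infty(M)$, so $\abs{\langle A_u^2(X_k),X_k\rangle - \langle A_u^2(X),X\rangle} \lesssim \|\nabla u\|_\infty^2 (\|X_k\|_{L^2} + \|X\|_{L^2}) \|X_k - X\|_{L^2}$, and this vanishes. The only point requiring any attention is the logarithmic capacity estimate $\|\nabla \eta_\varepsilon\|_{L^2} \to 0$, which is the feature that fails in dimensions $\geq 3$; everything else is soft functional analysis.
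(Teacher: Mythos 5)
Your proof is correct and follows exactly the route the paper indicates (its proof is a one-line remark invoking "a finite set of points has capacity zero" and "one can use a logarithmic cut-off function"): you mollify and project to reduce to smooth $X$, cut off with the standard logarithmic capacity cutoff, and pass to the limit in $D^2E(u)$ using that $\nabla u$ and $A$ are bounded. This is the fleshed-out version of the paper's argument, not a different one.
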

\begin{proof}
This follows from the fact that a finite set of points has capacity zero and the density of $C^\infty(M,u^*TN)$ in $(H^1\cap L^\infty)(M,u^* TN)$. Alternatively, one can use a logarithmic cut-off function in order obtain the same result.
\end{proof}
The next result is the crucial convergence statement for the second variation of the Dirichlet energy and we note that Theorem \ref{main1} is a direct consequence of the following Theorem.
\begin{theorem}
Let $u_k\in C^\infty(M,N)$ be a sequence of harmonic maps which bubble converges to $u_0\cup_{i=1}^L \omega_i$. Then for every $X_0\in C^\infty (M,u_0^*TN)$ and $Z_i\in C^\infty(S^2,\omega_i^*TN)$ with $1\leq i \leq L$, there exists a sequence $X_k\in C^\infty (M,u_k^*TN)$ so that 
\begin{align*}
\lim_{k\to \infty} D^2E(u_k)(X_k)= D^2E(u_0)(X_0)+\sum_{i=1}^l D^2E(\omega_i)(Z_i).
\end{align*}
\end{theorem}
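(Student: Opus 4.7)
The plan is to use the density lemma to arrange that $X_0$ and each $Z_i$ have support disjoint from the respective bubbling sets and from $\infty\in S^2$, and then to build $X_k$ as a sum of transplanted copies of these vectorfields via the nearest-point projection $P(u_k)$. The convergence will then follow from smooth convergence of $u_k$ to $u_0$ on $\spt X_0$, smooth convergence of $u_k\circ m_k^i$ to $\omega_i$ on $\spt Z_i$, and the conformal invariance of the two-dimensional second-variation integrand.

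First, by a diagonal argument together with the preceding lemma (applied on $M$ with $P=\Sigma_0$, and on $S^2$ with $P=\Sigma_i\cup\{\infty\}$ upon identifying $S^2\setminus\{\infty\}\cong \C$), I reduce to the case $X_0\in C^\infty_c(M\setminus\Sigma_0,u_0^*TN)$ and $Z_i\in C^\infty_c(\C\setminus\Sigma_i,\omega_i^*TN)$. Writing $K_i=\spt Z_i$, the scale inequality
\[
\max\{r_k^i/r_k^j,\, r_k^j/r_k^i,\, \dist(p_k^i,p_k^j)/(r_k^i+r_k^j)\}\to\infty
\]
recalled in Section 2 then implies that for $k$ large the sets $m_k^i(K_i)$ are pairwise disjoint, each contained in a small ball around $p_k^i$, and hence also disjoint from $\spt X_0$. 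I define
\[
X_k := P(u_k)X_0 + \sum_{i=1}^L P(u_k)\bigl(Z_i\circ (m_k^i)^{-1}\bigr),
\]
where the $i$-th summand is extended by zero outside $m_k^i(K_i)$. By the support disjointness, the cross terms in $D^2E(u_k)(X_k)$ vanish for $k$ large.

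The base contribution is straightforward: on $\spt X_0\Subset M\setminus\Sigma_0$, the smooth convergence $u_k\to u_0$ gives $P(u_k)X_0\to X_0$ in $C^1$, so $D^2E(u_k)(P(u_k)X_0)\to D^2E(u_0)(X_0)$. For the bubble contributions, I change variables using the conformal dilation $m_k^i$. In two dimensions, the Dirichlet integrand $|\nabla X|^2\,dv_g$ is conformally invariant, and the identity $A^2_{u\circ m}(X\circ m)=\tfrac12|\nabla m|^2(A^2_u(X))\circ m$ recalled in the excerpt, combined with the conformal Jacobian $\tfrac12|\nabla m_k^i|^2$ appearing in $dv_g$ under the change of variable, makes the potential integrand $\langle A^2_u(X),X\rangle\,dv_g$ conformally invariant as well. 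Thus the $i$-th summand equals the analogous second-variation integral for $P(u_k\circ m_k^i)Z_i$ over the fixed compact set $K_i\Subset \C\setminus\Sigma_i$, where $u_k\circ m_k^i\to\omega_i$ smoothly, and passing to the limit yields $D^2E(\omega_i)(Z_i)$.

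The main obstacle is the bookkeeping in the nested-bubble case, where a bubble $\omega_j$ attaches to another bubble $\omega_i$ instead of to $u_0$. In that situation, the separation between $m_k^i(K_i)$ and $m_k^j(K_j)$ requires first choosing $K_i$ to exclude a fixed small disk about the point of $\Sigma_i$ at which $\omega_j$ forms, and then using $r_k^j/r_k^i\to 0$ to conclude that $m_k^j(K_j)$ shrinks inside this excluded disk. Once the $K_i$ are chosen consistently along the entire bubble tree, the conformal-invariance argument above proceeds without further modification.
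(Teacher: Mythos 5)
Your proposal is correct and follows essentially the same route as the paper: reduce to compactly supported $X_0$, $Z_i$ via the density lemma, transplant via $P(u_k)$ and the conformal maps $m_k^i$, and invoke the two-dimensional conformal invariance of $D^2E$ together with the smooth local convergence on the base and bubble regions. Your extra bookkeeping on the scale inequality and the nested-bubble case just spells out why the supports of the $Z_{i,k}$ become pairwise disjoint for large $k$, a point the paper states without elaboration.
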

\begin{proof}
Due to the previous Lemma we may assume that $X_0$ resp. $Z_i$, $1\leq i \leq L$, vanishes in a neighbourhood of every energy concentration point of the bubble tree.

Since the maps $u_k$ bubble converge to $u_0 \cup_{i=1}^L \omega_i$ by our assumption, there exist associated conformal rescalings $m^i_{k}$ so that
\[
u_k \circ m_{i,k} \to \omega_i
\]
in $C^2_{loc}(\R^2 \backslash \Sigma_i)$.

Furthermore, the conformal invariance of $E$ implies that 
\[
D^2 E(u) (X)=D^2 E(u\circ m) (X\circ m)
\]
for every map $u$ and every conformal rescaling $m$. 

Hence, we have with $Z_{i,k}=(P(u_k)Z_i)\circ (m_k^i)^{-1}\in C^\infty(M,u_k^*TN)$ (note that this is well-defined because $Z_i$ vanishes in a neighbourhood of every energy concentration point)
\[
\lim_{k\to \infty} D^2E(u_k) (Z_{i,k})=\lim_{k\to \infty} D^2E(u_k\circ m_{i,k})(P(u_k)Z_i)=D^2E(\omega_i)(Z_i).
\]
Therefore, the sequence 
\[
X_k=P(u_k)X_0 +\sum_{i=1}^L Z_{i,k}
\]
has all the desired properties. For this we note that for $k$ sufficiently large, the vectorfields $Z_{i,k}$ have pairwise disjoint support.
\end{proof}

\section{Index upper bound}\label{section5}
The index upper bound will be a consequence of the following stronger theorem. 
\begin{theorem}\label{thm.convergenceeigenfunctions}
	Let $u_k\colon M \to N$ be a sequence of harmonic maps, which bubble converges to $u_0 \cup \{ \omega_i \}$ where $u_0\colon M \to N$ is the weak limit and the $\omega_i\colon S^2\to N$ are the bubbles.\\
	Let $X_k \in H^1(M, u^* TN)$ be a sequence of  eigenfunctions of $D^2E(u_k)$ with eigenvalues $\lambda_k \to \lambda$. Passing to a subsequence we find 
	\begin{enumerate}
		\item an eigenfunction $Y_0$ of $D^2E(u_0)$ with eigenvalue $\lambda$ and
		\item eigenfunctions $Z_i$ of $D^2E(\omega_i)$ with eigenvalues $\lambda$ 
	\end{enumerate}
	such that 
	\[\lim_{k\to \infty} \langle\langle X_k,X_k\rangle \rangle_{u_k} = \langle \langle Y_0,Y_0\rangle \rangle_{u_0} + \sum_{i} \langle Z_i,Z_i\rangle_{\omega_i}\,. \]
\end{theorem}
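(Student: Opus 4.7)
The plan is to decompose $M$ into the base region (where $u_k\to u_0$ smoothly), finitely many bubble regions (around the scales $r_k^i$ of the bubbles $\omega_i$) and the intermediate neck regions, and to analyze the sequence $X_k$ on each piece. The bilinear form $\langle\langle\cdot,\cdot\rangle\rangle_{u_k}$ gives us essentially for free the two uniform estimates $\|X_k\|_{W^{1,2}}\leq\sqrt{2}$ and $\lambda_k\geq -1$, while Lemma~\ref{lem.EquationEigenvectors} lets us work with the elliptic PDE \eqref{eq.EquationEigenvector2} whose coefficients are controlled by $|\nabla u_k|$ and $|\nabla u_k|^2$.

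On the base $M\setminus\Sigma_0$ one has $u_k\to u_0$ in $C^2_{\mathrm{loc}}$, so the coefficients in \eqref{eq.EquationEigenvector2} converge locally uniformly; combined with the uniform $H^1$-bound on $X_k$, standard linear elliptic theory yields a subsequence with $X_k\to Y_0$ strongly in $H^1_{\mathrm{loc}}(M\setminus\Sigma_0)$, where $Y_0$ solves the limiting eigenvalue equation for $u_0$ with eigenvalue $\lambda$. The capacity-zero nature of $\Sigma_0$ together with the global $W^{1,2}$-bound extends $Y_0$ to a genuine eigenfunction of $D^2E(u_0)$ on all of $M$, exactly as in the density Lemma of Section~\ref{lower bound}.

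For each bubble I rescale by the conformal dilation $m_k^i$ and set $\tilde X_k^i:=X_k\circ m_k^i$, $\tilde u_k^i:=u_k\circ m_k^i$. Using the scaling identity $A^2_{u\circ m}(X\circ m)=\tfrac12|\nabla m|^2(A^2_u(X))\circ m$, a direct computation shows that the rescaled eigenvalue equation reads
\[
P(\tilde u_k^i)\bigl(-\Delta\tilde X_k^i-A^2_{\tilde u_k^i}(\tilde X_k^i)\bigr)=\lambda_k\bigl((r_k^i)^2\tilde X_k^i+A^2_{\tilde u_k^i}(\tilde X_k^i)\bigr),
\]
in which the mass term carries a vanishing prefactor. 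Conformal invariance gives a uniform gradient bound for $\tilde X_k^i$; the positivity of $A^2_{\tilde u_k^i}$ from Lemma~\ref{lem.choiceoftheembedding} together with the non-triviality of $\omega_i$ then yields the missing weighted $L^2$-control needed to upgrade this to an $H^1_{\mathrm{loc}}$-bound. The same elliptic argument as on the base extracts a limit $Z_i\in H^1_{\mathrm{loc}}(\C\setminus\Sigma_i)$ satisfying
\[
P(\omega_i)\bigl(-\Delta Z_i-A^2_{\omega_i}(Z_i)\bigr)=\lambda\,A^2_{\omega_i}(Z_i),
\]
which is the eigenvalue equation for $D^2E(\omega_i)$ relative to the conformally invariant bilinear form $\langle\cdot,\cdot\rangle_{\omega_i}$. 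Removable singularity arguments at the points of $\Sigma_i$ and at infinity then put $Z_i$ in $H^1(S^2,\omega_i^*TN)$.

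The heart of the argument, and the step I expect to be the main obstacle, is the neck analysis: in the cylindrical coordinates $n_k^i$ on each neck one must show
\[
\int_{\mathrm{neck}_k}\bigl(|\nabla X_k|^2+|X_k|^2+|\langle A^2_{u_k}(X_k),X_k\rangle|\bigr)\,dv_g\longrightarrow 0.
\]
Granted this, the claimed norm identity follows by splitting $\langle\langle X_k,X_k\rangle\rangle_{u_k}$ into base, bubble and neck contributions: on the base both terms converge to $\langle\langle Y_0,Y_0\rangle\rangle_{u_0}$ by strong $H^1$-convergence; on each bubble region the $|X_k|^2$-piece is $O((r_k^i)^2)$ because the physical area shrinks like $(r_k^i)^2$, while the conformally invariant $A^2$-piece passes to $\langle Z_i,Z_i\rangle_{\omega_i}$ (after letting the rescaled radius tend to infinity, justified by the finiteness of $\langle Z_i,Z_i\rangle_{\omega_i}$); the neck piece vanishes by the above. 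The neck decay itself I would prove in the spirit of \cite{lin98,qing97,sacks81}: on each unit sub-cylinder of $[-L_k^i,L_k^i]\times T^1$ the map $u_k$ has small $\varepsilon$-energy, so the coefficients in \eqref{eq.EquationEigenvector2} are small in $L^p$; after a Fourier expansion of $X_k$ in the $T^1$-variable the non-zero modes satisfy a coercive ODE in $t$ with small perturbation and decay exponentially from the ends, while the zero mode is approximately affine in $t$ with boundary values controlled by matching with the base/bubble limits. The uniform $W^{1,2}$-bound on $X_k$ then forces both slope and boundary data to be small, giving the claimed decay.
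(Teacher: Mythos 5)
Your decomposition into base, bubble and neck is the same as the paper's, and your base and bubble analyses match the paper's Propositions \ref{prop.base} and \ref{prop.bubble} (including the rescaled eigenvalue equation with vanishing mass term and the use of the non-triviality of $\omega_i$ and the positivity of $A^2$ to recover the $L^2$-control). The gap is in the neck step, which is the heart of the matter.

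You propose to control the zero Fourier mode $\bar X_k(t)=\fint_{\{t\}\times T^1}X_k\circ n_k^i$ by arguing it is ``approximately affine with boundary values controlled by matching,'' and then claim ``the uniform $W^{1,2}$-bound on $X_k$ forces both slope and boundary data to be small.'' Two problems. First, the boundary data are not small --- they converge to the (generally nonzero) values of $Y_0$ and $Z_i$ at the matching scales, so what you actually want and need is an $L^\infty$ \emph{bound}, not smallness. Second, and more seriously, ``approximately affine'' requires controlling the deviation $\int_{[-L,L]\times T^1}(L-|t|)\,|f|$ where $f$ is the right-hand side of \eqref{eq.EquationEigenvector2}, which contains $|\nabla u_k|\,|\nabla X_k|$ and $|\nabla u_k|^2|X_k|$. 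With only the small-per-unit-cylinder $\varepsilon$-regularity that you invoke, $\int_{[-L,L]\times T^1}(L-|t|)^2|\nabla u_k|^2$ can be of order $\varepsilon_0 L^3$, and the deviation is not controlled uniformly as $L\to\infty$. The missing ingredient is the \emph{quantitative no-neck property} for the harmonic maps $u_k$ themselves: the pointwise exponential decay $|\nabla u_k\circ n_k^i|(t,\theta)\lesssim e^{\frac{1}{10}(|t|-L)}$ (Corollary \ref{cor.quantitative no neck} in the paper), proved via the holomorphicity of the Hopf differential plus $\varepsilon$-regularity. Only with this do $\int(L-|t|)^2|\nabla v|^2$ and hence $\int(L-|t|)|f|$ become bounded, which is what lets one run the maximum-principle/ODE argument for the zero mode and obtain the uniform $L^\infty$ bound for $X_k$ on the neck. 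Once that $L^\infty$ bound is in hand, testing the eigenvalue equation with a cutoff gives the vanishing of $\int|\nabla X_k|^2+\langle A^2_{u_k}(X_k),X_k\rangle$ on the shrinking neck, exactly as in the paper's Proposition \ref{prop.neck}; the $\int|X_k|^2$ piece then vanishes because the neck's physical area does. Your Fourier-mode picture is a fine heuristic, equivalent in spirit to the paper's split into the tangential derivative $\partial_\theta\phi$ and the $T^1$-average, but as written the zero-mode control has a genuine gap that only the exponential decay of $|\nabla u_k|$ closes.
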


We split the proof of the above Theorem into various subsections devoted to the different regimes of convergence: the base, the bubbles and the neck. Each subsections describes the convergence of the eigenfunctions in an independent proposition. In the final subsection we put them together to conclude the above theorem. 

Before we come the finer analysis let us note the following a priori bound: 
\begin{proposition}\label{energybound}
Let $X$ be a solution of \eqref{eq.EquationEigenvector1} with $\langle \langle X,X\rangle \rangle_u =1$, then we have
\[
\lambda \geq -1
\]
and
\[
\int_M(|X|^2+ |\nabla X|^2) \leq 1+|\lambda|.
\]
In particular, for every eigenvectorfield $X$ with eigenvalue $\lambda \leq 0$ we get the uniform bound
\[
\int_M(|X|^2+ |\nabla X|^2) \leq 2.
\]
\end{proposition}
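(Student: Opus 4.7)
The strategy is to test the eigenfunction equation \eqref{eq.EquationEigenvector1} against $X$ itself and integrate over $M$. Since $X(x) \in T_{u(x)} N$ almost everywhere and $P(u)$ is an orthogonal (hence self-adjoint) projection, the factor $P(u)$ on the left-hand side is transparent under the pairing with $X$: $\langle P(u) V, X\rangle = \langle V, X\rangle$ for every $V \in \mathbb{R}^m$. An integration by parts on the closed surface converts $\int_M \langle -\Delta X, X\rangle\, dv_g$ into $\int_M |\nabla X|^2\, dv_g$, so the left-hand side becomes exactly $D^2 E(u)(X,X)$, while the right-hand side equals $\lambda \langle\langle X,X\rangle\rangle_u = \lambda$ by the normalization. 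Thus one arrives at the identity $D^2 E(u)(X,X) = \lambda$.

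The proof is then finished by eliminating the indefinite $\langle A^2_u(X), X\rangle$ contribution. Writing out $\langle\langle X,X\rangle\rangle_u = 1$ as $\int_M |X|^2\, dv_g + \int_M \langle A^2_u(X), X\rangle\, dv_g = 1$ and adding this to $D^2 E(u)(X,X) = \lambda$ kills the $A^2_u$ term and yields the exact identity
\[
\int_M (|X|^2 + |\nabla X|^2)\, dv_g = 1 + \lambda.
\]
All three conclusions then fall out immediately: the left side is nonnegative, so $\lambda \geq -1$; the trivial inequality $\lambda \leq |\lambda|$ upgrades this to the claimed bound $\int_M (|X|^2 + |\nabla X|^2) \leq 1 + |\lambda|$; and when $\lambda \leq 0$ the first conclusion forces $|\lambda| \leq 1$, so $1 + |\lambda| \leq 2$.

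There is no real obstacle here; the proposition is essentially a one-line energy test. What is worth emphasizing is that this is precisely the structural motivation for weaving the $A^2_u$ correction symmetrically into both sides of \eqref{eq.EquationEigenvector1} and into the scalar product $\langle\langle \cdot,\cdot\rangle\rangle_u$: the potentially indefinite integral $\int_M \langle A^2_u(X), X\rangle\, dv_g$ appears with the same coefficient on both sides of the identity coming from the equation and from the normalization, and therefore cancels for free. This is exactly the mechanism that produces the uniform eigenvalue lower bound and the uniform $W^{1,2}$ bound advertised in the introduction without any further estimates.
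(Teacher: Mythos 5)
Your proof is correct and follows exactly the same route as the paper: test \eqref{eq.EquationEigenvector1} against $X$, integrate by parts, and add the normalization $\langle\langle X,X\rangle\rangle_u=1$ to cancel the indefinite $\langle A^2_u(X),X\rangle$ terms and arrive at the clean identity $\int_M(|X|^2+|\nabla X|^2)=1+\lambda$. The only difference is cosmetic: the paper folds the two steps into the single factored equality $\int_M(|X|^2+|\nabla X|^2)=(1+\lambda)\int_M(|X|^2+\langle A^2_u(X),X\rangle)=1+\lambda$, whereas you spell out the two identities being added.
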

\begin{proof}
Testing the equation \eqref{eq.EquationEigenvector1} with X implies by our normalization $\langle \langle X,X\rangle \rangle_u =1$ that
\[
\int_M (|X|^2+|\nabla X|^2)= (1+\lambda) \int_M  (|X|^2+ \langle A^2_u(X),X \rangle)  =1+\lambda
\]
and this implies all the claims.
\end{proof}

\subsection{Preliminary ``neck'' analysis}\label{subsec.neck analysis}
In this subsection we provide an a priori estimate on the tangential derivative to the solution of the Poisson's equation on a long neck $[-L, L]\times T^1$. As a byproduct of our analysis we obtain a quantitative version of the no neck property for a sequence $u_k$.
Our proof follows along the lines of the original paper of Sacks-Uhlenbeck, compare \cite[theorem 3.6]{sacks81}.

\begin{lemma}\label{lem.tangential estimate}
	Let $L\in \N$ and let $\phi \in W^{1,2}([-L, L]\times T^1)$ be a solution of 
	\[- \Delta \phi=f. \]
	Then we have the tangential estimate for all $|t_0|\le L-1$
 \begin{align}\label{eq.tangentialestimate}
     \int_{\{t_0\}\times T^1} |\partial_\theta \phi|^2& + \int_{[t_0-1,t_0+1]\times T^1} |\nabla \partial_\theta \phi|^2 \lesssim e^{-\frac19 (L-|t_0|)} \int_{[-L,L]\times T^1} |\nabla \phi|^2 \nonumber\\&+ \int_{[-L,L]\times T^1} \min\{e^{\frac19(1-|t-t_0|)},1\} \, |f|^2=:I\,,
 \end{align}
 where here and in the following we use the notation $a\lesssim b$ iff there exists a uniform constant $C>0$ such that $a\leq Cb$.
 
 Furthermore we have the following $L^\infty$-bound for all $p_0=t_0+i\theta_0$ with $|t_0|<L-1$ and $\theta_0\in [0,2\pi]$
 \begin{align}\label{eq.Linftybound}
     |\phi(p_0)|\le \max\{ \mint_{\{-L\}\times T^1} \phi,\mint_{\{L\}\times T^1} \phi \} + \int_{[-L,L]\times T^1} (L-|t|) |f| + c I^\frac12
 \end{align}
\end{lemma}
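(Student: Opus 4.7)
The plan is to exploit the cylindrical geometry together with the fact that the Laplacian on $T^{1}$ has a spectral gap on mean-zero functions. Decompose $\phi(t,\theta)=\overline\phi(t)+\tilde\phi(t,\theta)$, where $\overline\phi(t)=\mint_{T^{1}}\phi(t,\theta)\,d\theta$ is the slice-average and $\tilde\phi=\phi-\overline\phi$ has mean zero on each circle $\{t\}\times T^{1}$. Observe that $\partial_\theta\phi=\partial_\theta\tilde\phi$, so the tangential estimate really concerns $\tilde\phi$, and the slicewise Poincar\'e inequality $\int_{\{t\}\times T^{1}}|\tilde\phi|^{2}\le\int_{\{t\}\times T^{1}}|\partial_\theta\tilde\phi|^{2}$ holds with constant $1$.

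For the tangential estimate \eqref{eq.tangentialestimate} I would follow the Caccioppoli and hole-filling strategy of Sacks--Uhlenbeck. Multiplying $-\Delta\tilde\phi=f-\overline f$ by $\eta^{2}\tilde\phi$ for a piecewise-linear cutoff $\eta$ supported in an annular strip and integrating by parts yields
\[
\int\eta^{2}|\nabla\tilde\phi|^{2}\lesssim \int|\nabla\eta|^{2}|\tilde\phi|^{2}+\int\eta^{2}|\tilde\phi|\,|f-\overline f|\,,
\]
and after absorbing via the slicewise Poincar\'e (which converts $|\tilde\phi|^{2}$ into $|\nabla\tilde\phi|^{2}$) one obtains a hole-filling inequality of the form $E(\tau-1)\le \theta\,E(\tau)+W(\tau)$, where $E(\tau)=\int_{[-\tau,\tau]\times T^{1}}|\partial_\theta\tilde\phi|^{2}$, $0<\theta<1$ is a concrete constant and $W(\tau)$ is a weighted integral of $|f|^{2}$. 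Iterating this inequality across $O(L-|t_{0}|)$ concentric unit strips centered on $t=t_{0}$ produces a geometric decay of the boundary energy by a factor $e^{-\alpha (L-|t_{0}|)}$, together with an exponentially weighted integral of $|f|^{2}$; the exponent $\alpha=\tfrac{1}{9}$ is a bookkeeping artifact of the particular cutoff chosen and is not claimed to be sharp. The slice bound on $\{t_{0}\}\times T^{1}$ and the $H^{1}$-control on $\partial_\theta\tilde\phi$ over $[t_{0}-1,t_{0}+1]\times T^{1}$ are then recovered by applying interior elliptic regularity to the equation $-\Delta(\partial_\theta\tilde\phi)=\partial_\theta f$ together with the integrated bound just obtained.

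For the $L^{\infty}$ bound \eqref{eq.Linftybound} I would treat $\overline\phi$ and $\tilde\phi$ separately. The average $\overline\phi$ satisfies the one-dimensional problem $-\overline\phi''(t)=\overline f(t)$ on $[-L,L]$, whose explicit Green's function representation gives
\[
\overline\phi(t_{0})=\tfrac{L-t_{0}}{2L}\overline\phi(-L)+\tfrac{L+t_{0}}{2L}\overline\phi(L)+\int_{-L}^{L}G(t_{0},s)\,\overline f(s)\,ds,
\]
with $|G(t_{0},s)|\lesssim L-|s|$, which after applying Jensen in the $\theta$-variable to $\overline f$ yields the first two terms on the right of \eqref{eq.Linftybound}. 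For the mean-zero part $\tilde\phi$, the slicewise Poincar\'e gives the pointwise bound $|\tilde\phi(t,\theta)|\lesssim \|\partial_\theta\tilde\phi(t,\cdot)\|_{L^{2}(T^{1})}$ which, combined with a Morrey/Sobolev embedding on the two-dimensional strip $[t_{0}-1,t_{0}+1]\times T^{1}$ applied to $\partial_\theta\tilde\phi\in H^{1}$, upgrades to $\|\tilde\phi\|_{L^{\infty}}\lesssim I^{1/2}$ via \eqref{eq.tangentialestimate}.

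The principal obstacle is the careful tracking of the exponential weight on the source $|f|^{2}$ through the iteration of the hole-filling inequality, since each step contributes both a decayed boundary energy and a residual source integral; assembling these contributions into the single weight $\min\{e^{\frac{1}{9}(1-|t-t_{0}|)},1\}$ requires summing geometric series and choosing the cutoff widths so that the emerging constants line up. A secondary technical point is the passage from the slicewise $L^{2}$ control of $\tilde\phi$ to a genuine pointwise bound at $p_{0}$, which is precisely why \eqref{eq.Linftybound} depends on the full quantity $I$ including the $H^{1}$ gradient term on the two-dimensional neighborhood of $p_{0}$, rather than only on a one-slice quantity.
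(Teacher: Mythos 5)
Your proposal is correct and reaches the stated estimates, but it does so by a route that differs from the paper's in the main technical device, so let me compare the two.

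\emph{Where you differ from the paper.} You subtract the full slice average $\overline\phi(t)=\fint_{T^1}\phi(t,\cdot)$, so that $\tilde\phi$ has mean zero on every circle, and then run a standard Caccioppoli inequality with a cutoff $\eta$, eliminating boundary terms and iterating a discrete hole-filling inequality $E(\tau-1)\le\theta E(\tau)+cW(\tau)$. The paper instead subtracts a piecewise-\emph{linear} interpolant $q$ of the integer-slice averages, so that $\psi=\phi-q$ has zero slice mean only at integer heights; it tests the equation directly on subintervals $[a,b]\subset[k,k+1]$ \emph{without} a cutoff, and has to show that the resulting boundary integrals $\int_{\{k\}\times T^1}\psi\,\partial_t\psi$ depend only on $\phi$ (not on the interpolant) so that they telescope when the subinterval estimates are summed; this produces a continuous differential inequality in $t$ rather than your discrete one. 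The two mechanisms are functionally equivalent—both exploit the slicewise Poincar\'e inequality on mean-zero functions to absorb the $|\tilde\phi|^2$ (respectively $|\psi|^2$) terms—and both deliver the same exponential decay with a non-sharp rate $\tfrac19$. Your version is arguably cleaner, since the cutoff removes the delicate boundary bookkeeping; the paper's version avoids introducing a cutoff at all and keeps the boundary flux visible, which is closer in spirit to the energy-flux arguments used elsewhere in the neck analysis. Your treatment of $\overline\phi$ via the explicit Green's function on $[-L,L]$ (with $|G(t_0,s)|\lesssim L-|s|$) is an honest alternative to the paper's maximum-principle comparison with $P(t)=\int_0^t(t-s)\int_{\{s\}\times T^1}|f|\,ds$; they give the same bound.

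\emph{Two small inaccuracies to fix.} First, near the end you invoke ``a Morrey/Sobolev embedding on the two-dimensional strip applied to $\partial_\theta\tilde\phi\in H^1$'' to upgrade to $\|\tilde\phi\|_{L^\infty}\lesssim I^{1/2}$. In two dimensions $H^1$ does not embed into $L^\infty$, so that step as phrased does not hold. It is also unnecessary: the pointwise bound $|\tilde\phi(t_0,\theta)|\le\int_{T^1}|\partial_\theta\tilde\phi(t_0,\cdot)|\lesssim\bigl(\int_{\{t_0\}\times T^1}|\partial_\theta\phi|^2\bigr)^{1/2}$ (fundamental theorem of calculus plus mean-zero) combined with the \emph{first} term already present in the quantity $I$ of \eqref{eq.tangentialestimate}—which is exactly the slice trace $\int_{\{t_0\}\times T^1}|\partial_\theta\phi|^2$—gives $\|\tilde\phi(t_0,\cdot)\|_{L^\infty}\lesssim I^{1/2}$ directly; this is what the paper does. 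Second, the interior elliptic estimate for $-\Delta(\partial_\theta\tilde\phi)=\partial_\theta f$ controls $\int_{[t_0-1/2,t_0+1/2]\times T^1}|\nabla\partial_\theta\phi|^2$ in terms of $E(1)$ and $\|f\|_{L^2}$, whereas the lemma asserts the bound on the full strip $[t_0-1,t_0+1]\times T^1$; one should run the hole-filling one step further (control $E(2)$) before applying elliptic regularity, a harmless adjustment of constants. With these two repairs your argument is a complete and self-contained alternative proof.
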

\begin{proof}
We start with the tangential integral estimate \eqref{eq.tangentialestimate}.

	For this we define the piecewise linear function 
$q\colon [-L,L] \to \R^n$ as the linear interpolation between $\fint_{\{k\}\times T^1} \phi$ and $\fint_{\{k+1\}\times T^1} \phi$ for each $k \in \{-L, -L+1, \dotsc, L-1\}$. Furthermore we define the $W^{1,2}$-function $\psi=\phi-q$. Note that for each $k$ we have 
\[- \Delta \psi = f \text{ on } [k,k+1]\times T^1\text{ and } \mint_{\{t\} \times T^1} \psi =0 \text{ for } t \in \{k,k+1\}\,.\]
Since $q$ is harmonic we estimate 
\begin{align}\label{eq.simpleobservation}
	\int_{[k,k+1]\times T^1} |\nabla \psi|^2 &= \int_{[k,k+1]\times T^1} |\nabla \phi|^2- |\nabla q|^2 - 2 \int_{\partial [k,k+1]\times T^1} \frac{\partial q}{\partial \nu} (\phi-q)\nonumber \\
	&= \int_{[k,k+1]\times T^1} |\nabla \phi|^2- |\nabla q|^2\le \int_{[k,k+1]\times T^1} |\nabla \phi|^2
\end{align}

Testing the equation for $\psi$ on a fixed interval $[a,b]$ of type $[k, k+\delta]$ or $[k+\delta, k+1]$ for some $0\le \delta \le 1$ with $\psi$ itself gives 
\begin{align}
\int_{[a,b]\times T^1} |\nabla \psi|^2 =&  \int_{\partial [a,b]\times T^1} \frac{\partial \psi}{\partial \nu} \psi +\int_{[a,b]\times T^1} \psi f \nonumber \\
\leq& \int_{\partial [a,b]\times T^1} \frac{\partial \psi}{\partial \nu} \psi +c \int_{[a,b]\times T^1} |f|^2 +\frac13 \int_{[a,b]\times T^1} |\nabla \psi|^2, \label{eq.oneinterval}
\end{align}
where we have used Poincar\'e inequality 
\[\int_{[a,b]\times T^1} |\psi|^2 \le C \int_{[a,b]\times T^1} |\nabla \psi|^2\, \] relying on the fact that $\fint_{\{t\}\times T^1} \psi =0$ on one of the boundary components. 

Furthermore we claim that 
\begin{equation}\label{eq.boundaryestimate}
	|\int_{\{k+\delta\}\times T^1} \psi \partial_t \psi  | \le 3 \int_{\{k+\delta\}\times T^1} |\nabla \psi|^2 + \frac13 \int_{[a,b]\times T^1} |\nabla \psi|^2\,.
\end{equation}

This can be seen as follows for the case of $[k,k+\delta]$: The fundamental theorem of calculus yields 
\begin{align*}
	|\mint_{\{k+\delta\}\times T^1} \psi |&= | \mint_{\{k+\delta\}\times T^1} \psi- \mint_{\{k\}\times T^1} \psi| =| \frac{1}{2\pi}\int_{[k,k+\delta]\times T^1} \partial_t\psi | \\
	&\le \frac{1}{\sqrt{2\pi}} \left(\int_{[k,k+\delta]\times T^1} |\nabla \psi|^2\right)^\frac12\,.
\end{align*}
Hence we can estimate 
\begin{align*}
	\left|\int_{\{k+\delta\}\times T^1} \psi \partial_t \psi  \right| \le &\left(\int_{\{k+\delta\}\times T^1} |\partial_t\psi|^2\right)^\frac12\left(\int_{\{k+\delta\}\times T^1} |\psi-\mint_{\{k+\delta\}\times T^1} \psi |^2\right)^\frac12\\&+ \left| \mint_{\{k+\delta\}\times T^1} \psi \int_{\{k+\delta\}\times T^1} \partial_t \psi \right| \\\le &\int_{\{k+\delta\}\times T^1} |\nabla \psi|^2 \\
 &+ \left(\int_{\{k+\delta\}\times T^1} |\nabla \psi|^2
	\right)^\frac12   \left(\int_{[k,k+\delta]\times T^1} |\nabla \psi|^2\right)^\frac12\\
 \leq& 3 \int_{\{k+\delta\}\times T^1} |\nabla \psi|^2+\frac13\int_{[k,k+\delta]\times T^1} |\nabla \psi|^2\,. 
\end{align*}
Finally, we note that \[\int_{ \{k\}\times T^1} \psi \partial_t \psi  = \int_{\{k\}\times T^1} \partial_t \psi (\phi- q(k))= \int_{\{k\}\times T^1} \partial_t \phi (\phi-q(k)) \]
only depends on $\phi$ and not on the linear interpolation $q$.
Given any $0\le t\le L$ we may thus sum up the estimate \eqref{eq.oneinterval} on all intervals $[k,k+1]$ for $k \in \{-k_1+1,\dots, k_1\}$ with $k_1=\lfloor t\rfloor$ and the end intervals $[-t,-k_1+1], [k_1,t]$ giving in combination with \eqref{eq.boundaryestimate} the estimate
\begin{align}\label{eq.manyintervals}
	\frac23 \int_{[-t,t]\times T^1} |\nabla \psi|^2 &\le \int_{\partial [-t,t]\times T^1} \frac{\partial \psi}{\partial \nu} \psi + c \int_{[-t,t]\times T^1} |f|^2\nonumber\\
	&\le 3\int_{\partial[-t,t]\times T^1} |\nabla \psi|^2 + c \int_{[-t,t]\times T^1}|f|^2 + \frac13 \int_{[-t,t]\times T^1} |\nabla \psi|^2\,.
\end{align}
But this corresponds to a differential inequality and hence, for any $0\le a\le b \le L$ with $b\in \N$, $b\geq 2$, we have 
\begin{align}\label{eq.tangentialestimate01}
	\int_{[-a,a]}|\nabla \psi|^2 &\le e^{\frac19(a-b)} \int_{[-b,b]\times T^1} |\nabla \psi|^2 + c \int_{a}^b \int_{[-s,s]\times T^1} e^{\frac19(a-s)} |f|^2 ds \nonumber\\
	& \le e^{\frac19(a-b)} \int_{[-b,b]\times T^1} |\nabla \phi|^2 + c \int_{[-b,b]\times T^1} \min\{e^{\frac19(a-|t|)},1\} |f|^2 	
\end{align}
where we use $t$ as a coordinate on $[-L,L]$ and we also apply \eqref{eq.simpleobservation} on each subinterval. 

Finally, we may use classical $L^2$-theory with the trace estimates to deduce that 
\begin{align}\label{eq.L^infty+trace}
	\sup_{t \in [-1,1]} &\int_{\{t\}\times T^1} |\partial_\theta \phi|^2 + \int_{[-1,1]\times T^1} |\nabla \partial_\theta\phi|^2 \lesssim \int_{[-2,2]\times T^1} (|f|^2 + |\partial_\theta \phi|^2 )\nonumber\\
	&\lesssim e^{-\frac19b}\int_{[-b,b]\times T^1} |\nabla \phi|^2+ \int_{[-b,b]\times T^1} \min\{e^{\frac19(1-|t|)},1\} |f|^2
 \end{align}
 Applying the above to the translated functions $\phi_{t_0}(t,\theta)=\phi(t_0+t,\theta)$ and choosing $b=L-|t_0|$ shows the claimed estimate \eqref{eq.tangentialestimate}.

 For the $L^\infty$-bound we observe that the function  
 \[\bar{\phi}(t)= \int_{\{t\}\times T^1} \phi \]
solves the ODE
\[
\bar{\phi}''(t)=\bar{f}(t)
\]
where $\bar{f}(t)=-\int_{\{t\} \times T^1} f$. Next, we let
\[
P(t)=\int_0^t (t-s) \int_{\{s\}\times T^1} |f| \, ds
\]
and we note that 
\[
(P(t)\pm \bar{\phi}(t))''=\int_{\{t\} \times T^1} |f| \mp \int_{\{t\} \times T^1} f\geq 0.
\]
Hence it follows from the maximum principle that
\begin{align*}
\max_{t\in [-L,L]} (P(t)\pm \bar{\phi}(t))=& \max_{t\in \{-L,L\}} (P(t)\pm \bar{\phi}(t))\\
\leq& \max \{\int_{[0,L]\times T^1} (L-|t|)|f|, \int_{[-L,0]\times T^1} (L-|t|)|f|\}\\
&+\max \{ \bar{\phi}(L), \bar{\phi}(-L)\}\,.
\end{align*}
Finally, we note that 
\[
|\phi(t_0)|\leq \mint_{\{t_0\} \times T^1} |\phi(t_0,\theta)| \, d\theta+ \int_{\{t_0\} \times T^1}|\partial_\theta \phi(t_0,\theta)|\, d\theta
\]
and the pointwise bound is thus a consequence of the bound for $\bar{\phi}$ and the integral estimate for the tangential derivative. 
\end{proof}

As a corollary of the differential inequality derived above we obtain a quantitative version of the no-neck property for a sequence of harmonic maps. Related results on cylinders of large but finite length have been obtained previously in \cite{chenliwang}.
\begin{corollary}\label{cor.quantitative no neck}
	There exists a constant $\varepsilon_0>0$ such that for every harmonic map $v\colon (-\infty,L)\times T^1 \to N$ with $L\in \R$ satisfying 
	\begin{itemize}
		\item[(a1)] $\int_{(-\infty, L)\times T^1}|\nabla v|^2 < \infty $;
		\item[(a2)] $\int_{B_1(p)} |\nabla v|^2 \le \varepsilon_0$ for all $p \in [-L,L]\times T^1$.
	\end{itemize}
	then for each $j\in \N$ and $|t|<L-2$ we have
	\begin{equation}\label{eq.neckgradientbound}
		|\nabla^j v|^2(t,\theta) \lesssim e^{\frac{1}{10}(|t|-L)} \int_{[-L,L]\times T^1} |\nabla v|^2\,.
	\end{equation} 
\end{corollary}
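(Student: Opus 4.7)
My plan combines three ingredients: standard $\varepsilon$-regularity for harmonic maps, the conservation law coming from the divergence-free stress-energy tensor, and a Gr\"onwall-type bootstrap built on top of the tangential estimate of Lemma~\ref{lem.tangential estimate}. First, I would invoke standard $\varepsilon$-regularity for harmonic maps (\`a la Sacks--Uhlenbeck): after possibly shrinking $\varepsilon_0$, assumption (a2) yields the pointwise bounds $|\nabla^j v|^2(p) \lesssim \int_{B_1(p)} |\nabla v|^2$ for every $p\in(-\infty,L-1)\times T^1$ and every $j\in\N$, and in particular $|\nabla v|^2\leq C\varepsilon_0$ pointwise. Hence the corollary reduces to proving the slice-integral decay
\[
\int_{[t_0-1,t_0+1]\times T^1}|\nabla v|^2 \lesssim e^{\frac{1}{10}(|t_0|-L)}\int_{[-L,L]\times T^1}|\nabla v|^2,
\]
after which the $\varepsilon$-regularity bound is applied once more at $(t_0,\theta_0)$.

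Second, I would use that the stress-energy tensor $T_{\alpha\beta} = \langle \partial_\alpha v,\partial_\beta v\rangle - \tfrac{1}{2}|\nabla v|^2 \delta_{\alpha\beta}$ of a harmonic map is divergence-free. Integrating $\partial^\alpha T_{\alpha t}=0$ over a slice $\{t\}\times T^1$ and using periodicity in $\theta$ shows that $\int_{\{t\}\times T^1}(|\partial_t v|^2 - |\partial_\theta v|^2)\,d\theta$ is independent of $t$. The finite energy assumption (a1), together with the pointwise bound $|\nabla v|^2\leq C\varepsilon_0$, forces this constant to vanish (slice energies must go to zero along some sequence $t_n\to -\infty$), so
\[
\int_{\{t\}\times T^1}|\partial_t v|^2\,d\theta = \int_{\{t\}\times T^1}|\partial_\theta v|^2\,d\theta \qquad\text{for every } t.
\]
In particular the full slice energy equals twice the tangential slice energy, which is exactly the object controlled by Lemma~\ref{lem.tangential estimate}.

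Third, I would apply Lemma~\ref{lem.tangential estimate} with $\phi=v$ on $[-L,L]\times T^1$ and $f=-\Delta v = A(v)(\nabla v,\nabla v)$; by the first step $|f|^2\lesssim |\nabla v|^4 \leq C\varepsilon_0|\nabla v|^2$. Combining \eqref{eq.tangentialestimate} with the Hopf identity above, and writing $G(t_0):=\int_{\{t_0\}\times T^1}|\nabla v|^2$, $E:=\int_{[-L,L]\times T^1}|\nabla v|^2$, this produces the integral inequality
\[
G(t_0)\lesssim e^{-\frac{1}{9}(L-|t_0|)} E + \varepsilon_0\int_{-L}^L w(t-t_0)\,G(t)\,dt, \qquad w(s):=\min\{e^{(1-|s|)/9},1\}.
\]
To close the bootstrap I would set $H(t_0):=G(t_0)\,e^{(L-|t_0|)/10}/E$, observe the elementary estimate $\int_{-L}^L w(s)\,e^{|s|/10}\,ds \leq C_5<\infty$ (since $\tfrac{1}{10}<\tfrac{1}{9}$ keeps the weight integrable), and obtain, after multiplying the inequality by $e^{(L-|t_0|)/10}/E$, the closed estimate $H(t_0) \leq C_1 + C_2 C_5\,\varepsilon_0\,\|H\|_\infty$. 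Since $\|H\|_\infty$ is a priori finite (being bounded by $C\varepsilon_0 e^{L/10}/E$), taking the supremum over $|t_0|\leq L-2$ and choosing $\varepsilon_0$ small enough that $C_2 C_5\varepsilon_0<\tfrac{1}{2}$ gives $\|H\|_\infty\leq 2C_1$, i.e.\ the desired slice-energy decay. Integrating $G$ over $[t_0-1,t_0+1]$ and invoking $\varepsilon$-regularity from the first step yields \eqref{eq.neckgradientbound}.

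The main obstacle is this bootstrap: the loss from the natural exponent $\tfrac{1}{9}$ of Lemma~\ref{lem.tangential estimate} to the stated $\tfrac{1}{10}$ is precisely what makes the tent-weight $w$ integrable against the trial exponential decay, and the smallness of $\varepsilon_0$ is what allows the self-improvement to close. A minor subtlety is verifying that the stress-energy constant vanishes on the half-infinite end, which follows from combining finite total energy with the pointwise smoothness provided by $\varepsilon$-regularity.
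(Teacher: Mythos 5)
Your proposal is correct and uses the same three ingredients as the paper's proof ($\varepsilon$-regularity, the holomorphicity of the Hopf differential, and the machinery of Lemma~\ref{lem.tangential estimate}), but it closes the argument differently. The paper does not apply the finished estimate \eqref{eq.tangentialestimate}; instead it re-enters the proof of Lemma~\ref{lem.tangential estimate} at the stage of the differential inequality \eqref{eq.manyintervals}, observes that $\int_{[-t,t]\times T^1}|f|^2 \lesssim \varepsilon_0 \int_{[-t,t]\times T^1}|\partial_\theta v|^2$ together with the Hopf identity makes the $f$-term absorbable into the left-hand side for $\varepsilon_0$ small, and obtains the clean pure differential inequality $\tfrac{3}{10}\int_{[-t,t]}|\nabla v|^2 \le 3\int_{\partial[-t,t]}|\nabla v|^2$, which integrates directly to the exponential decay with rate $\tfrac{1}{10}$. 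You instead treat Lemma~\ref{lem.tangential estimate} as a black box, feed $|f|^2\lesssim\varepsilon_0|\nabla v|^2$ into its conclusion, and run a Gr\"onwall/self-improvement bootstrap on $H(t_0)=G(t_0)e^{(L-|t_0|)/10}/E$, exploiting the gap $\tfrac{1}{10}<\tfrac{1}{9}$ to make the convolution weight $w(s)e^{|s|/10}$ integrable. Both are valid; the paper's route is slightly more direct because the absorption happens at the level of the ODE inequality, whereas your route is more modular but requires two small points you should spell out: (i) the supremum defining $\|H\|_\infty$ runs over $|t_0|\le L-2$ while the integral $\int_{-L}^L w(t-t_0)G(t)\,dt$ reaches into the boundary strips $|t|\in[L-2,L]$ where $G$ is not controlled by $\|H\|_\infty$ — you need to observe that on these strips $\int G\le E$ while the weight contributes $w(t-t_0)e^{(L-|t_0|)/10}\lesssim e^{(L-|t_0|)(1/10-1/9)}\lesssim 1$, so this contribution only adjusts $C_1$; and (ii) the a priori finiteness of $\|H\|_\infty$ comes from $G(t_0)\lesssim\varepsilon_0$ via $\varepsilon$-regularity (with a constant depending on $L$, which is harmless for the bootstrap). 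With those remarks your argument is complete.
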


\begin{proof}
	We apply the results of the above lemma to the harmonic map equation 
	\[\Delta v = A(v)(\nabla v, \nabla v) \text{ i.e. } f=A(v)(\nabla v, \nabla v)\,.\]
	Firstly, recall that the $\varepsilon$-regularity for harmonic maps implies that for every $\varepsilon_0$ small enough and every $j\in \N$
	\begin{equation}\label{eq.gradient estimate}
		|\nabla^j v|^2(p) \lesssim \int_{B_1(p)} |\nabla v|^2 \lesssim \varepsilon_0 \text{ for all } p \in [-L+1, L-1] \times T^1\,.
	\end{equation}
	Secondly, we recall that the quadratic differential $(\partial_zv)^2 dz \otimes dz$ is holomorphic. Hence the finite energy of $v$ implies that 
	\begin{equation}\label{eq.holomorphichopf}
		\int_{\{t\}\times T^1} |\partial_tv|^2 = \int_{\{t\}\times T^1} |\partial_\theta v|^2 \text{ for all } t<L\,.
	\end{equation}
	This can be seen as follows: Let $t_k \to -\infty$ s.t. $\lim_{k\to \infty} \int_{\{t_k\}\times T^1} |\nabla v|^2=0$. Then we have 
	\begin{align*}
		\int_{\{t\}\times T^1} (\partial_zv)^2 \,d\theta &= \lim_{k} \left(\int_{\{t\}\times T^1} (\partial_zv)^2 \,d\theta - \int_{\{t_k\}\times T^1} (\partial_zv)^2 \,d\theta\right)\\& = \lim_{k} \int_{[t_k,t]\times T^1} \partial_{\bar{z}} (\partial_zv)^2 \,d\bar{z}\wedge dz=0\,.
	\end{align*}
	Taking the real part of this identity implies \eqref{eq.holomorphichopf}.
	Now we can refine the differential inequality \eqref{eq.manyintervals} by noting that due to \eqref{eq.gradient estimate} and \eqref{eq.holomorphichopf} we have for any $0<t<L-1$ 
	\begin{align*}
		\int_{[-t,t]\times T^1} |f|^2 &\le C \sup_{[-t,t]\times T^1} |\nabla v|^2 \int_{[-t,t]\times T^1}|\nabla v|^2 \le C\varepsilon_0 \int_{[-t,t]\times T^1} |\partial_\theta v|^2 .
	\end{align*} 
	Hence, for $\varepsilon_0$ sufficiently small, the differential inequality from the previous lemma reads as follows
	\[ \frac{3}{10}\int_{[-t,t]\times T^1} |\nabla v|^2 \le 3\int_{\partial[-t,t]\times T^1}|\nabla v|^2\,. \]
	We can then proceed as above to deduce that for any $0<a<b\le L-1$ with $b \in \N$ we obtain 
	\begin{align*}
		\frac12 \int_{[-a,a]\times T^1} |\nabla v|^2 =& \int_{[-a,a]\times T^1} |\partial_\theta v|^2 
  \le e^{\frac{1}{10}(a-b)} \int_{[-b,b]\times T^1} |\nabla v|^2\,.
	\end{align*}
	Appealing once more to \eqref{eq.gradient estimate} and applying it to translates of $v$ provides \eqref{eq.neckgradientbound}.
\end{proof}

\subsection{Convergence on the base}\label{subsec.convergence on the base}
From here on we assume that all assumptions of Theorem \ref{thm.convergenceeigenfunctions} are satisfied.
\begin{proposition}\label{prop.base}
	Up to passing to a subsequence there exists $Y_0 \in C^{2,\alpha}(M, u_0^*TN)$ satisfying 
	\begin{align}
		\label{convergence base} X_k \to&  Y_0 \ \ \text{in} \ \  L^2(M)\cap C^{2,\alpha}_{loc}(M\setminus \Sigma_0) \ \ \ \text{and} \\
		\label{equation base} P(u_0)(-\Delta Y_0 -A_{u_0}^2(Y_0)) =& \lambda (Y_0+A_{u_0}^2(Y_0)).
	\end{align}
	In particular we have 
	\begin{align}\label{eq.quantitive convergence base} 
		\lim_{r\to 0} \limsup_{k\to \infty} &\int_{M \cap \{\dist(\cdot,\Sigma_0)>r\}}(|\nabla X_k|^2 + |X_k|^2 +\langle A^2_{u_k}(X_k),X_k\rangle)\nonumber \\= &\int_M (|\nabla Y_0|^2 + |Y_0|^2 +\langle A^2_{u_0}(Y_0),Y_0\rangle)\,.
	\end{align}
\end{proposition}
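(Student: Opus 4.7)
The plan is three-fold: extract a weak limit $Y_0$ via the uniform bounds from Proposition \ref{energybound}, upgrade to classical convergence on compact subsets of $M\setminus\Sigma_0$ using the smooth convergence $u_k\to u_0$ there, and finally extend the limiting PDE across the finitely many points of $\Sigma_0$ by a capacity argument. The quantitative statement \eqref{eq.quantitive convergence base} will then fall out by combining global $L^2$-convergence with local $C^{2,\alpha}$-convergence and dominated convergence as $r\to 0$.

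First, since $\lambda_k\to\lambda$, Proposition \ref{energybound} gives $\|X_k\|_{H^1(M)}^2\le 1+|\lambda_k|\lesssim 1$. Passing to a subsequence, I obtain $Y_0\in H^1(M,\R^m)$ with $X_k\rightharpoonup Y_0$ weakly in $H^1$ and $X_k\to Y_0$ strongly in $L^2(M)$; a further subsequence gives pointwise a.e. convergence. Since $X_k(x)\in T_{u_k(x)}N$ and $u_k\to u_0$ a.e.\ (smoothly away from $\Sigma_0$), the limit satisfies $Y_0(x)\in T_{u_0(x)}N$, so $Y_0\in H^1(M,u_0^*TN)$.

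Next, I work on compact sets $K\subset\subset M\setminus\Sigma_0$, where the $\varepsilon$-regularity for harmonic maps yields $u_k\to u_0$ in $C^2_{loc}(M\setminus\Sigma_0)$. Hence the coefficients $B_{u_k}^i,C_{u_k}$ in the reformulation \eqref{eq.EquationEigenvector2} of the eigenvalue equation converge in $C^1(K)$ and are uniformly bounded. Viewing \eqref{eq.EquationEigenvector2} as a uniformly elliptic linear system for $X_k$ on $K$ with right-hand side $\lambda_k X_k$, interior $L^p$ and Schauder estimates, combined with a bootstrap, yield uniform $C^{2,\alpha}$-bounds on any $K'\subset\subset K$. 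Consequently $X_k\to Y_0$ in $C^{2,\alpha}_{loc}(M\setminus\Sigma_0)$, and passing to the limit in \eqref{eq.EquationEigenvector1} (or equivalently \eqref{eq.EquationEigenvector2}) gives
\[
P(u_0)(-\Delta Y_0-A^2_{u_0}(Y_0)) = \lambda(Y_0+A^2_{u_0}(Y_0))
\]
pointwise on $M\setminus\Sigma_0$.

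To extend this to all of $M$ I use the density lemma from Section \ref{lower bound}: any test field $\Phi\in H^1\cap L^\infty(M,u_0^*TN)$ can be approximated in $H^1$ by fields $\Phi_j\in C^\infty$ supported in $M\setminus\Sigma_0$, because the finite set $\Sigma_0$ has vanishing $2$-capacity. The weak form of the above PDE already holds when tested against each $\Phi_j$ (as a consequence of the weak convergence of $X_k$ in $H^1$ together with the smoothness of the coefficients on $\spt\Phi_j$), so passing $j\to\infty$ yields the weak equation on $M$. Standard elliptic regularity with the smooth coefficients determined by $u_0$ then upgrades $Y_0$ to $C^{2,\alpha}(M,u_0^*TN)$, establishing \eqref{convergence base} and \eqref{equation base}.

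Finally, for \eqref{eq.quantitive convergence base}, the $C^{2,\alpha}_{loc}$-convergence together with $u_k\to u_0$ in $C^2_{loc}(M\setminus\Sigma_0)$ gives, for every fixed $r>0$,
\[
\lim_{k\to\infty}\int_{\{d(\cdot,\Sigma_0)>r\}}(|\nabla X_k|^2+|X_k|^2+\langle A^2_{u_k}(X_k),X_k\rangle)=\int_{\{d(\cdot,\Sigma_0)>r\}}(|\nabla Y_0|^2+|Y_0|^2+\langle A^2_{u_0}(Y_0),Y_0\rangle),
\]
and letting $r\to 0$ the right-hand side converges to the integral over all of $M$ by dominated convergence, since $Y_0\in H^1(M)$ and $u_0$ is smooth so the integrand is in $L^1(M)$.

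The step I expect to be delicate is the extension of the PDE across $\Sigma_0$: away from $\Sigma_0$ everything is classical elliptic theory, but one must rule out concentrated contributions of $Y_0$ at the bubble points. The capacity/density argument handles this cleanly in two dimensions, but it is the only non-routine point in the proof.
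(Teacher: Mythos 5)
Your proof is correct and follows essentially the same approach as the paper's: extract a weak $H^1$ limit from the a priori bound of Proposition \ref{energybound}, upgrade to local $C^{2,\alpha}$ convergence on $M\setminus\Sigma_0$ via elliptic bootstrap of \eqref{eq.EquationEigenvector2} (using that $u_k\to u_0$ in $C^2_{loc}(M\setminus\Sigma_0)$), pass to the limit in the PDE, and observe that $Y_0$ is tangent to $N$ along $u_0$. Your proposal in fact spells out two points the paper leaves implicit --- the capacity/density argument extending the weak equation across the null-capacity set $\Sigma_0$ (followed by interior elliptic regularity with the smooth coefficients of $u_0$, which yields $Y_0\in C^{2,\alpha}(M)$ rather than only $C^{2,\alpha}_{loc}(M\setminus\Sigma_0)$), and the derivation of \eqref{eq.quantitive convergence base} via locally uniform convergence on $\{\dist(\cdot,\Sigma_0)>r\}$ followed by dominated convergence as $r\to 0$ --- so these are welcome clarifications rather than departures.
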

\begin{proof}
Due to the a priori $W^{1,2}$ bound from Lemma \ref{energybound}, we may pass to a subsequence with $X_k \rightharpoonup Y_0$ in $W^{1,2}(M)$. In particular $X_k \to Y_0$ strongly in $L^2(M)$.
\medskip

Next we claim that the sequence $X_k$ is uniformly bounded in $C^{2}_{loc}(M\setminus \Sigma_0)$.

For $k\in \N$ fixed and $\varphi \in C^\infty_c(M\setminus \Sigma_0)$ it follows from \eqref{eq.EquationEigenvector2} that
\begin{align*}
-\Delta (\varphi X_k)= -B^i_{u_k} \partial_i (\varphi X_k)  -\Delta \varphi X_k -2\nabla \varphi \nabla X_k +B^i_{u_k} X_k \partial_i \varphi +(\lambda_k- C_{u_k}) \varphi X_k\,.
\end{align*}

By our assumptions, Proposition \ref{energybound} and the fact that $u_k \to u_0$ strongly in $W^{1,2}(M\setminus \Sigma_0)$,
it follows that the right hand side of this equation is in $L^2(M)$ and thus $\varphi X_k \in W^{2,2}(M)$. Bootstrapping this argument finally yields the claim since $u_k \to u_0$ in $C_{loc}^{2,\alpha}(M\setminus \Sigma_0)$. This implies that the coefficients in the equations above $B^i_{u_k}, C_{u_k}$ converge in $C^{0,\alpha}(M\setminus \Sigma_0)$ to $B^i_{u_0}, C_{u_0}$ and therefore they are locally uniformly bounded on $M\backslash \Sigma_0$.
\medskip

The claim of the Proposition is now an immediate consequence of the Arzel\`a-Ascoli theorem and the just established uniform a priori bound in $C^{2,\alpha}_{loc}(M\setminus \Sigma_0)$. In particular we deduce the convergence of the equation \eqref{eq.EquationEigenvector2} and 
\[(1-P(u_0(p)))Y_0(p)=\lim_{k\to\infty} (1-P(u_k(p)))X_k(p) = 0 \text{ for every }  p \in M\setminus \Sigma_0\,.\]
Hence $Y_0(p)$ is a tangent vectorfield to $N$ in $u_0(p)$. 
\end{proof}

\subsection{Convergence on the bubble}\label{subsec.convergence on the bubble}
Next, we study the convergence of the sequence $X_k$ on the domains where the bubbles form.
\begin{proposition}\label{prop.bubble}
	For every bubble $\omega_i$, up to passing to a subsequence, there exists $Z_i \in C^2(S^2, \omega_i^*TN)$ satisfying 
	\begin{align}
		\label{convergence bubble} X_k\circ m_k^i \to& Z_i\ \ \ \text{in} \ \ \ C^{2,\alpha}_{loc}(\R^2\backslash \Sigma_i) \ \ \ \text{and}	\\	
 P(\omega_i)(-\Delta Z_i -A_{\omega_i}^2(Z_i)) =& \lambda A_{\omega_i}^2(Z_i).
	\end{align}
	In particular, we have 
	\begin{align}\label{eq.quantitive convergence bubble} 
		\lim_{r\to 0} \limsup_{k\to \infty} &\int_{\C \cap \{\dist(\cdot,\Sigma_i)>r\}\cap B_{\frac1r}}(|\nabla (X_k\circ m_k^i)|^2 +\langle A^2_{u_k\circ m_k^i}(X_k\circ m_k^i),(X_k\circ m_k^i)\rangle) \nonumber \\= &\int_\C(|\nabla Z_i|^2 + \langle A^2_{\omega_i}(Z_i),Z_i\rangle)\,.
	\end{align}
\end{proposition}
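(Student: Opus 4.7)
The plan is to mirror the strategy of Proposition \ref{prop.base} in the rescaled coordinates provided by $m_k^i$, exploiting the conformal invariance of the 2D Dirichlet energy and of the bilinear form $\langle\cdot,\cdot\rangle_u$. Set $\tilde{X}_k := X_k\circ m_k^i$ and $\tilde{u}_k := u_k\circ m_k^i$. Since $m_k^i$ is a dilation by factor $r_k^i$, the identities $(\Delta f)\circ m_k^i=(r_k^i)^{-2}\Delta(f\circ m_k^i)$ and $A^2_{u\circ m_k^i}(X\circ m_k^i)=(r_k^i)^2(A^2_u(X))\circ m_k^i$ transform \eqref{eq.EquationEigenvector1} into
\[
P(\tilde{u}_k)\bigl(-\Delta\tilde{X}_k-A^2_{\tilde{u}_k}(\tilde{X}_k)\bigr)
=\lambda_k\bigl((r_k^i)^2\tilde{X}_k+A^2_{\tilde{u}_k}(\tilde{X}_k)\bigr).
\]
The structural point is that the non-conformally-invariant $L^2$ term on the right picks up the vanishing factor $(r_k^i)^2\to 0$ while the conformally invariant $A^2$ term survives; any $C^2_{loc}$-limit therefore automatically satisfies the stated equation $P(\omega_i)(-\Delta Z_i-A^2_{\omega_i}(Z_i))=\lambda A^2_{\omega_i}(Z_i)$. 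Rewriting as in Lemma \ref{lem.EquationEigenvectors} yields $-\Delta\tilde{X}_k+B^i_{\tilde{u}_k}\partial_i\tilde{X}_k+C_{\tilde{u}_k}\tilde{X}_k = \lambda_k(r_k^i)^2\tilde{X}_k$, in which the effective right-hand eigenvalue $\lambda_k(r_k^i)^2\to 0$ and, thanks to $\tilde{u}_k\to\omega_i$ in $C^{2,\alpha}_{loc}(\C\setminus\Sigma_i)$, the coefficients are uniformly bounded in $L^\infty_{loc}(\C\setminus\Sigma_i)$ and converge in $C^\alpha_{loc}$.

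Next, Proposition \ref{energybound} combined with the conformal invariance of the 2D Dirichlet energy gives
\[
\int_{\C}|\nabla\tilde{X}_k|^2=\int_M|\nabla X_k|^2\le 2,
\]
so $\tilde{X}_k$ is bounded in $\dot H^1(\C)$. Unlike in the base case, however, the conformal rescaling does \emph{not} preserve the $L^2$-norm, and producing a uniform $L^\infty_{loc}$-bound on $\tilde{X}_k$ is the main obstacle. I would resolve it by a blow-up argument. Suppose $M_k:=\|\tilde{X}_k\|_{L^\infty(K)}\to\infty$ for some compact $K\subset\subset\C\setminus\Sigma_i$; setting $W_k:=\tilde{X}_k/M_k$, the sequence $W_k$ satisfies the same homogeneous linear equation with $\|W_k\|_{L^\infty(K)}=1$ and $\|\nabla W_k\|_{L^2(\C)}\le\sqrt 2/M_k\to 0$. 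Interior $W^{2,p}$-theory applied to this linear equation (bootstrapping $L^\infty(K)\Rightarrow W^{2,2}\Rightarrow W^{2,p}$ for all $p<\infty$) yields a uniform $C^{1,\alpha}$-bound on $W_k$ on the interior of $K$, so a subsequence converges in $C^0_{loc}$ to a locally constant tangent vectorfield $W_\infty\in T_{\omega_i}N$ with $|W_\infty|\equiv 1$ at some limit of maximizers. Passing to the limit in the rescaled equation for $W_k$ gives $(1+\lambda)A^2_{\omega_i}(W_\infty)=0$, and the quantitative non-degeneracy
\[
\langle A^2_{\omega_i}(W_\infty),W_\infty\rangle\ge c|\nabla\omega_i|^2|W_\infty|^2
\]
from Lemma \ref{lem.choiceoftheembedding} then forces $W_\infty\equiv 0$ on the open set where $|\nabla\omega_i|>0$ whenever $\lambda>-1$; the borderline $\lambda=-1$ is handled by further combining the tangency constraint $W_\infty(z)\in T_{\omega_i(z)}N$ with the non-constancy of the bubble $\omega_i$. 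Either way we reach a contradiction, establishing the desired $L^\infty_{loc}$-bound.

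With the $L^\infty_{loc}$-bound secured, bootstrapping the rescaled equation through $W^{2,p}$- and Schauder estimates delivers uniform $C^{2,\alpha}_{loc}(\C\setminus\Sigma_i)$-bounds on $\tilde{X}_k$; Arzelà--Ascoli then produces a subsequential limit $Z_i$, and passing to the limit in the equation yields the claimed PDE. The fact that $Z_i\in T_{\omega_i}N$ pointwise follows from $P(\tilde{u}_k)\tilde{X}_k=\tilde{X}_k$ together with $P(\tilde{u}_k)\to P(\omega_i)$. Standard removable-singularity theorems at the points of $\Sigma_i$ and at $\infty$ (via the conformal identification $\C\cong S^2\setminus\{\infty\}$) upgrade $Z_i$ to an element of $C^2(S^2,\omega_i^*TN)$. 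Finally, \eqref{eq.quantitive convergence bubble} on $K_r:=B_{1/r}\cap\{\dist(\cdot,\Sigma_i)>r\}$ is immediate from the uniform $C^{2,\alpha}$-convergence of both $\tilde{X}_k\to Z_i$ and $\tilde{u}_k\to\omega_i$ on $K_r$ together with dominated convergence.
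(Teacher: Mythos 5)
Your approach to the $L^\infty_{loc}$-bound via blow-up is genuinely different from the paper's. The paper goes through an $L^2_{loc}$-bound first: since $\omega_i$ is non-constant, one picks a compact set $E_i\subset\subset\C\setminus\Sigma_i$ of positive measure on which $|\nabla\omega_i|\ge 2\delta>0$, so $|\nabla\tilde u_k|\ge\delta$ there for $k$ large. Then the conformal invariance of $\langle\cdot,\cdot\rangle_u$ together with Lemma \ref{lem.choiceoftheembedding} gives the key uniform estimate
\[
\delta^2\int_{E_i}|\tilde X_k|^2\le\int_{E_i}|\nabla\tilde u_k|^2|\tilde X_k|^2\lesssim\langle\langle X_k,X_k\rangle\rangle_{u_k}=1,
\]
which combined with $\|\nabla\tilde X_k\|_{L^2}\lesssim1$ and Poincar\'e yields a uniform $W^{1,2}(K)$-bound on every compact $K$; bootstrapping then gives $C^{2,\alpha}_{loc}$. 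This needs no case distinction in $\lambda$.

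Your blow-up route is clean for $\lambda>-1$: the rescaled equation forces the constant limit $W_\infty$ to satisfy $(1+\lambda)A^2_{\omega_i}(W_\infty)=0$, and Lemma \ref{lem.choiceoftheembedding} then kills $W_\infty$ on the open set $\{|\nabla\omega_i|>0\}$, contradicting $|W_\infty|\equiv1$. But the treatment of $\lambda=-1$ is a genuine gap. Your proposed fix — ``the tangency constraint $W_\infty(z)\in T_{\omega_i(z)}N$ combined with non-constancy of $\omega_i$'' — is not a valid argument in general: a nonzero constant vector in $\R^m$ \emph{can} be tangent to $N$ along the entire image of a non-constant bubble. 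Take for instance $N=T^2\times S^2\subset\R^4\times\R^3$ with $\omega_i$ mapping into a slice $\{q\}\times S^2$; then any nonzero vector in $T_qT^2\times\{0\}$ is a constant tangent field along $\omega_i$. So nothing rules out $W_\infty\neq0$ by tangency alone. Note also that $\lambda=-1$ cannot simply be excluded: $\lambda_k>-1$ always (since $\int(|X_k|^2+|\nabla X_k|^2)=1+\lambda_k>0$), but $\lambda_k\to-1$ is allowed, and Theorem \ref{thm.convergenceeigenfunctions} must cover this.

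The correct way to close the gap, even within your blow-up framework, is essentially the paper's observation: the normalization gives $\int_{E_i}|\tilde X_k|^2\le c/\delta^2$ uniformly in $k$ on a set $E_i$ of positive measure where $|\nabla\omega_i|\ge2\delta$. Dividing by $M_k^2\to\infty$ then forces $\int_{E_i}|W_k|^2\to0$, hence $W_\infty=0$ on $E_i$, hence $W_\infty\equiv0$ since it is constant — contradiction, with no reference to the sign of $1+\lambda$. Once you have this ingredient, the rest of your argument (elliptic bootstrap to $C^{2,\alpha}_{loc}$, removability of $\Sigma_i$ and $\infty$, passage to the limit, and \eqref{eq.quantitive convergence bubble} by uniform convergence on $K_r$) is fine and parallels the paper's conclusion.
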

\begin{proof}
We start by remarking that for a conformal transformation $m:M  \to M$ the Laplacian and the operator $A_u^2$ transform according to the rule
\[
-\Delta (X\circ m)= \frac12|\nabla m|^2 (-\Delta X)\circ m
\]
resp.
\[
A^2_{u\circ m}(X\circ m) =\frac12|\nabla m|^2 (A^2_u(X))\circ m.
\]
Thus, the eigenfunction equation \eqref{eq.EquationEigenvector1} transforms as follows
\begin{align}
    P(u\circ m) (-\Delta (X\circ m))=(1+\lambda) A^2_{u\circ m} (X\circ m)+\frac\lambda2 |\nabla m|^2 X\circ m. \label{eq.eigenfunctionmoebius} 
\end{align}
If we now suppose that $m^i_k:\R^2\to M$ (here we assume loss of generality that we can localize everything to work on $\R^2$) is a sequence of conformal transformations such that $u_k\circ m_k^i$ converges strongly in $W^{2,2}_{loc}(\R^2 \backslash \Sigma_i,N)$ to a non-trivial bubble $\omega_i:\C \to N$. Then we look at $X_k\circ m^i_k$ and we claim that $X_k\circ m^i_k$  converges in $C^{2,\alpha}_{loc}(\R^2 \backslash \Sigma_i)$, for every $0<\alpha<1$, to a map $Z_i\in H^1(S^2,\omega_i^*TN)$ which is an eigenfunction of $D^2E(\omega_i)$ for the eigenvalue $\lambda$. We note that along the bubbling process our scalar product $\langle \langle \cdot, \cdot \rangle \rangle_{u_k}$ converges to $\langle \cdot, \cdot, \rangle_{u_k}$.

Since $\omega_i$ is non-trivial, we can find a compact subset $K\subset \R^2\setminus \Sigma_i$ and $\delta >0$ such that 
\[
E_i:=\{ x\in K:\, |\nabla \omega_i|(x)>2\delta\}
\]
has positive measure. It follows from the bubble convergence that $u_k \circ m^i_k$ converges to $\omega_i$ in $C^2_{loc}(\R^2\backslash \Sigma_i)$, and hence there exists a number $k_0\in \N$ so that
\[
E_i\subset \{x\in K:\, |\nabla (u_k\circ m^i_k) (x)| >\delta\}
\]
for every $k\geq k_0$. Thus, we obtain
\[
\int_{E_i} |X_k\circ m^i_k|^2\leq \frac{1}{\delta^2}\int_{E_i} |\nabla (u_k\circ m^i_k)|^2 |X_k\circ m^i_k|^2 \leq \frac{c}{\delta^2} \langle \langle X_k,X_k \rangle \rangle_{u_k} \leq \frac{c}{\delta^2},
\]
where we used the conformal invariance of the second term in the definition of $\langle \langle \cdot, \cdot \rangle \rangle_u$ and Lemma \ref{lem.choiceoftheembedding}.

Next we use this estimate in order to bound
\begin{align*}
|\mint_{E_i} X_k\circ m^i_k|^2\leq \mint_{E_i} |X_k\circ m^i_k|^2 \leq \frac{c}{\delta^2|E_i|}.
\end{align*}
Together with the bound
\begin{align*}
\int_K |X_k\circ m^i_k-\mint_{E_i} X_k\circ m^i_k|^2\leq&
c\int_K |X_k\circ m^i_k-\mint_K X_k\circ m^i_k|^2 \\
&+c|K| |\mint_{E_i} X_k\circ m^i_k-\mint_K X_k\circ m^i_k|^2\\ \leq& c\|\nabla X_k \|^2_{L^2(K)} \\
&+\frac{c|K|}{|E_i|}\|X_k \circ m^i_k-\mint_K X_k\circ m^i_k\|^2_{L^2(E_i)}\\ \leq& c(1+\frac{|K|}{|E_i|}),
\end{align*}
where we used again the conformal invariance of $\|\nabla X_k\|_{L^2}$ and Lemma \ref{energybound}.
This implies for every compact subset $K \subset \R^2\backslash \Sigma_i$ with positive measure the uniform $W^{1,2}$-bound
\begin{align*}
\|X_k\circ m^i_k\|_{L^2(K)}+\|\nabla (X_k\circ m^i_k)\|_{L^2(K)} \leq c(1+\frac{1}{\delta \sqrt{|E_i|}}+\sqrt{\frac{|K|}{|E_i|}}).
\end{align*}
Thus we can argue as in the proof of Proposition \ref{prop.base} in order to use standard PDE techniques to get locally uniform $C^{2,\alpha}$-bounds, for every $0<\alpha<1$, and this finishes the above claim about the convergence of the eigenfunctions on the bubbles once we observe that in our case $|\nabla m^i_k|\to 0$ as $k\to \infty$. In particular this shows that the term $\lambda_k |\nabla m^i_k|^2 X_k\circ m^i_k$ in the PDE \eqref{eq.eigenfunctionmoebius} disappears in the limit and there the claimed convergence in \eqref{eq.quantitive convergence bubble} holds. Note that the possible point singularities of the limit $Z_i$ can be removed by standard capacity considerations.
It then follows from the same arguments as in the proof of Proposition \ref{prop.base} that $Z_i \in C^2(S^2,\omega_i^*TN)$.
\end{proof}

\subsection{Convergence on the neck}\label{subsec.convergence on the neck}
Finally, we study the convergence on the neck regions.
\begin{proposition}\label{prop.neck}
	For every neck we have $X_k\circ n_k^{i} \to 0$ in $C^{2,\alpha}_{loc}(\R \times T^1)$
\end{proposition}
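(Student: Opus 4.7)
The plan is to pull back the eigenfunction equation to the cylinder $[-L_k^i, L_k^i]\times T^1$ via $n_k^i$, observe that every coefficient in the resulting PDE tends to zero uniformly on compacta (thanks to the no-neck property for the harmonic maps $u_k$), and then show that any subsequential $C^{2,\alpha}_{loc}$-limit of $\tilde X_k := X_k\circ n_k^i$ is a harmonic vector field on $\R\times T^1$ with finite Dirichlet energy, hence a constant vector, which must finally be shown to vanish.

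First, combining the conformal transformation rule \eqref{eq.eigenfunctionmoebius} with the projection identity of Lemma \ref{lem.EquationEigenvectors} gives on $[-L_k^i, L_k^i]\times T^1$
\[
-\Delta\tilde X_k + \tilde B_k^j \partial_j \tilde X_k + \tilde C_k \tilde X_k = \tfrac{\lambda_k}{2}|\nabla n_k^i|^2\, \tilde X_k,
\]
with $|\tilde B_k|\lesssim|\nabla\tilde u_k|$, $|\tilde C_k|\lesssim (1+|\lambda_k|)|\nabla\tilde u_k|^2$, and $|\nabla n_k^i|^2=2\varrho_k^2 e^{-2t}$. By the energy identity $\tilde u_k$ is harmonic with arbitrarily small energy on any fixed $[-T,T]\times T^1$, so Corollary \ref{cor.quantitative no neck} applies and yields $|\nabla^j\tilde u_k|^2\to 0$ exponentially on compact subsets of $\R\times T^1$; since also $\varrho_k\to 0$, every coefficient of the PDE tends to zero uniformly on each $[-T,T]\times T^1$.

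Conformal invariance of the Dirichlet integral and Proposition \ref{energybound} give the global bound $\int_{[-L_k^i,L_k^i]\times T^1}|\nabla\tilde X_k|^2\le 2$. To upgrade this to uniform $L^\infty_{loc}$-bounds, I would apply the pointwise estimate \eqref{eq.Linftybound} of Lemma \ref{lem.tangential estimate} (componentwise) on sub-cylinders $[-L_k^i+C, L_k^i-C]\times T^1$; the slice averages $\mint \tilde X_k(L_k^i-C,\cdot)$ are controlled by the convergence $X_k\circ m_k^i\to Z_i$ from Proposition \ref{prop.bubble} (the slice at $t=L_k^i-C$ corresponds to the circle $|z|=e^C$ in the bubble chart, which for fixed $C$ sits in a fixed compact subset of $\C\setminus\Sigma_i$), and analogously on the opposite end. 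Since all coefficients of the PDE are small on compacta, standard elliptic bootstrap turns this into a uniform $C^{2,\alpha}_{loc}$-bound.

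By Arzelà-Ascoli and a diagonal extraction, a subsequence $\tilde X_k\to X_\infty$ in $C^{2,\alpha}_{loc}(\R\times T^1)$. Passing to the limit in the PDE (all coefficients vanish) yields $-\Delta X_\infty=0$ on the whole infinite cylinder, and Fatou gives $\int_{\R\times T^1}|\nabla X_\infty|^2\le 2$. A Fourier expansion in the $T^1$-variable combined with the finite Dirichlet energy constraint forces $X_\infty$ to be a constant vector, which by the no-neck property for $u_k$ lies in $T_{p_\infty}N$ where $p_\infty=\lim\tilde u_k$.

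The main obstacle is to show that this constant vanishes. The plan is a matching argument at the two ends of the neck: letting $C\to\infty$ appropriately and using $\tilde X_k(L_k^i-C,\theta)=(X_k\circ m_k^i)(e^C e^{-i\theta})\to Z_i(e^C e^{-i\theta})$ together with the fact that $Z_i\in C^2(S^2,\omega_i^*TN)$ extends continuously to $\infty\in S^2$ (Proposition \ref{prop.bubble}), one identifies the limiting constant with $Z_i(\infty)$, and similarly with $Y_0(p)$ via Proposition \ref{prop.base} at the opposite end. Exploiting the exhaustive enumeration of concentration points in the bubble tree --- any further non-trivial rescaled limit at $p$ would have been captured as an additional bubble --- forces this common constant value to be zero, and the proof is complete.
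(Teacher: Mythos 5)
Your compactness strategy (uniform $L^\infty$ bound, elliptic bootstrap, subsequential $C^{2,\alpha}_{loc}$ limit, Liouville argument on the infinite cylinder) correctly identifies that any limit of $\tilde X_k := X_k\circ n_k^i$ is a constant. The $L^\infty$ bound via \eqref{eq.Linftybound} with slice averages controlled by Propositions \ref{prop.base} and \ref{prop.bubble} is essentially the first step of the paper's proof as well. However, the argument breaks down at the final, decisive step, and it also misses the estimate that the paper actually needs.

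The ``matching argument'' is a genuine gap. You identify the middle-cylinder constant $X_\infty$ with $Z_i(\infty)$ (and $Y_0(p)$) by comparing the iterated limit ``$k\to\infty$ at $t=L_k^i-C$, then $C\to\infty$'' with the limit ``$k\to\infty$ at $t$ fixed''. But $L_k^i\to\infty$, so the ends recede from any fixed compact set; the two limits are taken at divergent distance, and equating them is precisely a no-neck statement \emph{for the vector fields $X_k$}. This is exactly the content of Proposition \ref{prop.neck} itself, so the identification is circular unless one controls the oscillation of $\tilde X_k$ across the \emph{entire} neck $[-L_k^i,L_k^i]\times T^1$ — something your compactness-on-fixed-compacta argument does not provide. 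Moreover, the concluding appeal to ``exhaustive enumeration of concentration points in the bubble tree'' has no bearing here: completeness of the bubble tree constrains where the \emph{energy of $u_k$} concentrates, not the behaviour of the eigenvector fields $X_k$. There is no reason, from the bubble tree alone, why a non-zero constant tangent vector at $u_0(p)=\omega_i(\infty)$ should be excluded. Note also that even $Y_0(p)$ and $Z_i(\infty)$ have no a priori reason to vanish, so identifying $X_\infty$ with either of them would still leave the claim unproven.

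Separately, even if $X_\infty=0$ were established on compacta of $\R\times T^1$, this does not yield the quantitative decay \eqref{eq.quantitive convergence neck}, which is an estimate on the \emph{growing} region $[-L_k^i+l,L_k^i-l]\times T^1$ and is the statement actually used in the proof of Theorem \ref{main2}. The paper does not attempt to extract a limit and identify a constant at all: after the $L^\infty$ bound it tests the transformed eigenvalue equation \eqref{eq.eigenfunctionmoebius} with $\eta^2\tilde X_k$, where $\eta$ is a cutoff equal to $1$ on $[-L+2l,L-2l]$ with $|\eta'|\lesssim l^{-1}$, and combines the $L^\infty$ bound with the exponential gradient decay $|\nabla v_k^i|\lesssim e^{\frac{1}{10}(|t|-L)}$ from Corollary \ref{cor.quantitative no neck} and the smallness of $|\nabla n_k^i|^2$ to obtain
\[
\int \eta^2\bigl(|\nabla\tilde X_k|^2+\langle A^2_{u_k\circ n_k^i}(\tilde X_k),\tilde X_k\rangle\bigr)\ \lesssim\ l^{-1}+e^{-\frac{1}{5}l}+\rho_k^2 e^{2L_k^i}.
\]
This directly gives \eqref{eq.quantitive convergence neck}; the $C^{2,\alpha}$ convergence is merely remarked upon and explicitly noted to be irrelevant for the subsequent analysis. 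Your approach replaces this direct energy estimate with a Liouville/matching argument and does not recover it, so it is not a viable alternative route even granting the unproved matching step.
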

In particular we have 
\begin{align}\label{eq.quantitive convergence neck} 
		\lim_{l\to \infty} \limsup_{k\to \infty} &\int_{[-L_k^i+l,L_k^i-l]\times T^1}|\nabla (X_k\circ n_k^{i})|^2 +\langle A^2_{u_k\circ n_k^{i}}(X_k\circ n_k^{i}),X_k\circ n_k^{i}\rangle =0\,.
	\end{align}
\begin{proof}
In a first step we show that the sequence $X_k\circ n_k^{i}$ is uniformly bounded in $L^\infty$.
\medskip

For a fixed $k$ we consider the vectorfield $\tilde{X}_k^i=X_k\circ n_k^{i}$ related to the harmonic map $v_k^i=u_k\circ n_k^{i}$ solving the conformal transformed equation \eqref{eq.eigenfunctionmoebius} with $m_k^i=n_k^{i}$ in particular it is a solution of type, compare \eqref{eq.EquationEigenvector2}, 
\[-\Delta \tilde{X_k^i} =- B^j_{v_k^i} \partial_j \tilde{X}_k^i - C_{v_k^i} \tilde{X}_k^i +\frac{\lambda_k}{2}|\nabla n_k^i|^2 \tilde{X}_k^i=:f_k^i\,.\]
In the rest of the proof we drop the indices $k$ and $i$ since they are fixed.

We note that 
\[|f|\lesssim  |\nabla v|\left(|\nabla \tilde{X}| + |\nabla v| |\tilde{X}|\right) + |\nabla n|^2 |\tilde{X}|\,\text{ and } |n|,\frac{1}{\sqrt{2}}|\nabla n|=\rho e^{-t}\le \rho e^{L} \ll 1\]
by our discussion in section $2$.

We want to apply Lemma \ref{lem.tangential estimate} and we recall that due to Corollary \ref{cor.quantitative no neck} we have $|\nabla v|(t, \theta) \lesssim e^{\frac{1}{10}(|t|-L)}\lesssim 1$.
Hence we estimate 
\begin{align*}
    \int_{[-L,L]\times T^1} |f|^2 &\lesssim \int_{[-L,L]\times T^1}  (|\nabla \tilde{X}|^2 + |\nabla v|^2 |\tilde{X}|^2) + \int_{[-L,L]\times T^1}|\nabla n|^4 |\tilde{X}|^2 \\&\lesssim (1+|\lambda|) \langle \langle X,X\rangle \rangle_u\,.
\end{align*}
Furthermore, we get
\begin{align*}
    \int_{[-L,L]\times T^1} (L-|t|) |f| \lesssim& \int_{[-L,L]\times T^1} (L-|t|)\left(|\nabla v|\left(|\nabla \tilde{X}| + |\nabla v| |\tilde{X}|\right) + |\nabla n|^2 |\tilde{X}| \right) \\
    \le& \left(\int_{[-L,L]\times T^1} (L-|t|)^2 |\nabla v|^2\right)^\frac12 (\langle X,X\rangle_u)^\frac12 \\
    &+ \left(\int_{[-L,L]\times T^1} (L-|t|)^2 |\nabla n|^2\right)^\frac12  \left(\int_{M} |X|^2\right)^\frac12  
\end{align*}
The right hand side is bounded since
\begin{align*}
    \int_{[-L,L]\times T^1} (L-|t|)^2 |\nabla v|^2 \lesssim \int_{[-L,L]\times T^1} (L-|t|)^2 e^{\frac15(|t|-L)}\lesssim 1,\\
    \int_{[-L,L]\times T^1} (L-|t|)^2 |\nabla n|^2 \le 4\pi \int_{0}^L \rho^2 (L-t)^2 e^{2t} \, dt \le 4\pi \rho^2 e^{2L} \int_{0}^\infty \tau^2 e^{-2\tau}\,d\tau \lesssim 1
\end{align*}
where we have used the definitions of $L$ and $\rho$ and the fact that $|\nabla n|^2=2\rho^2e^{-2t}$.

Furthermore, the convergence on the base and the bubbles, proposition \ref{prop.base}, \ref{prop.bubble} imply that $|\tilde{X}(\pm L, \theta)|$ is uniformly bounded. Hence every term on the right hand side of \eqref{eq.Linftybound} is uniformly bounded implying the desired $L^\infty$ bound for $X_k\circ n_k^i$. 
\medskip

Next, we test the eigenvalue equation \eqref{eq.eigenfunctionmoebius} with $\eta(t)^2 \tilde{X}$ where $\eta \equiv 1$ on $[-L+2l, L-2l]$ and its support is contained in $[-L+l, L-l]$, i.e. we may assume that $|\eta'|\lesssim l^{-1}$. We obtain the bound 
\begin{align*}
    \int \eta^2 \left( |\nabla X|^2 +\langle A^2_u(X),X\rangle \right) &\lesssim \int \eta^2 \left( |\nabla \tilde{X}|^2 + |\nabla v|^2 |\tilde{X}|^2\right)\\
     &\lesssim \int |\eta'|^2 |\tilde{X}|^2 + c \eta^2 \left(|\nabla v|^2 + |\nabla n|^2 \right) |\tilde{X}|^2\\
    &\lesssim \left( l^{-1} + e^{-\frac15l} + \rho^2 e^{2L} \right)\,.
\end{align*}
Therefore the claimed convergence \eqref{eq.quantitive convergence neck} follows immediately. (One might even use the equation once more to improve the convergence to a convergence in $C^{2,\alpha}$ but this is irrelevant for our further analysis.) 
\end{proof}

\subsection{Combining the different regimes and the proof of Theorem \ref{main2}}
We consider two arbitrary sequences of eigenfunctions $X_k, \hat{X}_k$ with corresponding eigenvalues $\lambda_k, \hat{\lambda}_k$, that might even agree. 
As a consequence of the previous sections, we can pass to an appropriate sub-sequence such that
\begin{align*}
    X_k &\xrightarrow{\text{``bubble converge''}} Y_0 \cup \{Z_i\}\\
    \hat{X}_k &\xrightarrow{\text{``bubble converge''}} \hat{Y}_0 \cup \{\hat{Z}_i\}\,
\end{align*}
where ``bubble converge'' is a short hand notation for the convergence established in the previous sections. 
In particular we can combine \eqref{eq.quantitive convergence base}, \eqref{eq.quantitive convergence bubble}, \eqref{eq.quantitive convergence neck}: 
Firstly due to \eqref{convergence base} we have 
\[\lim_{k \to \infty} \int_{M} |X_k + \hat{X}_k|^2 - |X_k|^2 - |\hat{X}_k|^2 = \int_M |Y_0+\hat{Y}_0|^2 - |Y_0|^2-|\hat{Y}_0|^2\,.\]
Secondly we split the part involving $A^2_{u_k}$ in the different regimes. For the sake of readability we introduce the quantity 
\[a(u, X, \hat{X}) = \langle A^2_{u}(X+\hat{X}), X+ \hat{X}\rangle -  \langle A^2_{u}(X), X\rangle - \langle A^2_{u}(\hat{X}),  \hat{X}\rangle\, \]
and estimate 
\begin{align}\label{eq.polarisationconvergence}
    \lim_{k\to \infty} \int_M & a(u_k, X_k, \hat{X}_k)= \lim_{r\downarrow 0}\lim_{k \to \infty } \Big(\int_{M \cup \{\dist(\cdot \Sigma_0)>r\}}a(u_k, X_k, \hat{X}_k)\nonumber\\
    &+ \sum_{i} \lim_{k \to \infty } \int_{\C \cup \{\dist(\cdot \Sigma_i)>r\}\cap B_{\frac1r}}a(u_k\circ m_k^i, X_k\circ m_k^i, \hat{X}_k\circ m_k^i) \nonumber\\
    &+ \sum_{i} \int_{[-L_{k}^i+M_k^i(r), L_k^i-M_k^i(r)]\times T^1} a(u_k\circ n_k^i, X_k\circ n_k^i, \hat{X}_k\circ n_k^i)\Big)\nonumber \\&
    = \int_M a(u_0, Y_0, \hat{Y}_0) + \sum_{i} \int_{\C} a(\omega_i, Z_i \hat{Z}_i) + 0
\end{align}
where the $M_k^i(r)\to \infty$ as $r\downarrow 0$ are chosen appropriately. 
The polarisation identity implies now the convergence of the scalar products.

\begin{proof}[Proof of Theorem \ref{main2}]
Let us denote by $Y_k(\lambda)$ the eigenspace of the index form  $D^2E(u_k)$ with respect to the scalar product $\langle\langle \cdot,\cdot \rangle\rangle_{u_k}$ (compare appendix \ref{sec.appA}), with respect to the eigenvalue $\lambda$. In particular we have 
\[\Ind(u_k)= \dim( \bigcup_{\lambda <0} Y_k(\lambda)) \text{ and } \Nul(u_k) = \dim Y_k(0)\,.\]
For a fixed $m \le \limsup_{k \to \infty} \Ind(u_k)+ \Nul(u_k)$ we may choose for every $k$ an orthogonal family of eigenfunctions $\{ X_k^\alpha\}_{\alpha=1}^m $. For each fixed $\alpha$ we can apply theorem \ref{thm.convergenceeigenfunctions} such that up to a subsequence 
\[ X^\alpha_k \xrightarrow{\text{``bubble converge''}} Y_0^\alpha \cup \{ Z_i^\alpha\}\,.\]
Due to the just established convergence \eqref{eq.polarisationconvergence} their union on the right build an $m$-dimensional space on which the index form 
\[ D^2E(u_0) + \sum_i D^2E(\omega_i)\]
is non positive. This fact immediately proves the theorem.
\end{proof}
\section{The case of general two-dimensional conformally invariant variational problems}
General conformally invariant integrands on a Riemann surface $M$ have been classified by Gr\"uter \cite{grueter} and they are given by 
\begin{equation}\label{eq.conformalinvarinatintegrand}
	\mathcal{E}(u) = \frac12\int_{M} |\nabla u|^2 \, dv_g + u^*\varpi 
\end{equation}
under the constraint that $u(x) \in N$ a.e. and where $\varpi$ is an arbitrary two-form on $N$. We will assume as before that $N$ is at least of class $C^3$ and that it is isometrically embedded into some euclidean space $\R^n$. Moreover, we assume that the $2$-form $\varpi$ is as well of class $C^3$ and without loss of generally we assume that it is globally defined on $\R^n$. 
By the celebrated result of Rivi\`ere \cite{riviere} on conservation laws for critical points of conformally invariant variational problems, the Euler-Lagrange equation of \eqref{eq.conformalinvarinatintegrand} can be written as
\[ \Delta u = \Omega(u)\cdot\nabla u\,, \]
where $\Omega(u) \in L^2(M, so(n)\otimes \bigwedge^1\R^n)$. More precisely one has the explicit expression
\begin{align}\label{eq.structureofOmega}
	\langle \xi, \Omega(u)\cdot \nabla u \rangle =& \langle \xi, A(u)(\nabla u, \nabla u)\rangle -\langle \nabla u, A(u)(\nabla u, \xi)\rangle \\\nonumber &+ d\varpi(u)(P(u)\xi\wedge P(u)\partial_1u\wedge P(u)\partial_2u)\,.
\end{align}
In particular the $\varepsilon$-regularity result of Rivi\`{e}re \cite{riviere} applies and one has the same bubbling phenomenon as in the previously considered case of harmonic map (see also \cite{lammsharp} for further details on this). 

The second variation for $\mathcal{E}$ is a bilinear form defined for for $X \in u^*TN$ of the following form, see e.g. \cite{dalio} for a derivation of this formula,
\begin{equation}\label{eq.generalindexform}
	D^2\mathcal{E}(u)(X)= \int_{M} (|\nabla X|^2 - \langle A^2_u(X),X\rangle + b_u(\nabla X, X) + c_u(X,X))\, dv_g\,
\end{equation} 
where the bilinear forms $b_u, c_u$ are determined by the geometric quantities of $N, \varpi$ and they satisfy the pointwise bounds 
\begin{equation}\label{eq.boundsoncoefficients}
	|b_u|\lesssim |\nabla u| \text{ and } |c_u|\lesssim |\nabla u|^2.
\end{equation}
In the following we want to give a short outline why the analysis presented for the harmonic map case directly applies to the general case of critical points of conformally invariant variational integrals since all relevant structural properties are preserved. 
\medskip

\emph{1. Universal $W^{1,2}$-bound and universal lower bound on the eigenvalues}
\medskip

Note the the growth conditions \eqref{eq.boundsoncoefficients} imply that for constants $0<c_1,c_2<$, depending only on $N$ and $\varpi$, one has 
\[ D^2\mathcal{E}(X,X) \le \frac12 \int_M |\nabla X|^2 dv_g- c_1 \int_{M} |\nabla u|^2 |X|^2 dv_g \le  \frac12 \int_M |\nabla X|^2 dv_g - c_2 \langle X,X\rangle_u\,. \]
By a small modification of the arguments presented in Proposition \ref{energybound} one also obtains a lower bound on the eigenvalues and a global $W^{1,2}$-bound for eigenfunctions of $D^2\mathcal{E}(u)$ with respect to the scalar product $\langle \langle \cdot, \cdot \rangle \rangle_u$.   
\medskip

\emph{2. Exponential decay of $\nabla u$ on the neck}
\medskip

First we note that due to \eqref{eq.structureofOmega} one has for any $i=1,2$ the identity 
\[ \langle \partial_i u, \Omega(u)\nabla u\rangle =0\,.\]
Hence one immediately checks that $\partial_{\bar{z}} (\partial_z u)^2 =0$, or equivalently, that the quadratic differential $(\partial_zv)^2 dz \otimes dz$ is holomorphic. 
Together with the fact that $|\Omega(u)\cdot \nabla u|\lesssim |\nabla u|^2$ one notes that Corollary \ref{cor.quantitative no neck} remains true. 
\medskip

\emph{3. Structure of the eigenvalue equation} 
\medskip

The eigenvalue equation for $D^2\mathcal{E}(u)$ determined by the scalar product $\langle\langle \cdot, \cdot \rangle \rangle_u$ is given by 
\begin{equation}\label{eq.generalEquationEigenvector1}P(u)(-\Delta X - A^2_u(X) - \operatorname{div}(\hat{b}^t_u(X)) + \hat{b}_u(\nabla X) + \hat{c}_u(X) )= \lambda (X+ A^2_u(X))\end{equation}
and can be rewritten in the form 
\[-\Delta X - \partial_i( (B^i_u)^t X)+  B_u^i \partial_i X + C_u X = \lambda X \]
where the coefficients $B_u^i$ and $C_u$ are determined by geometric quantities of $N, \varpi$ and evaluated at $u$. They satisfy the bounds
\[|B_u^i|\lesssim |\nabla u| \text{ and } |C_u|\lesssim (1+|\lambda|) |\nabla u|^2\,.
\]
As in the harmonic map case the eigenvalue equation \ref{eq.generalEquationEigenvector1} is well behaved when composing with a conformal transformation $m$: 
Namely, the transformed eigenfunction equation is given by 
\begin{align}\nonumber
    P(u\circ m) (-\Delta (X\circ m))&=(1+\lambda) A^2_{u\circ m} (X\circ m)+\frac\lambda2 |\nabla m|^2 X\circ m\\& + \operatorname{div}(\hat{b}^t_{u\circ m}(X\circ m)) + \hat{b}_{u\circ m}(\nabla (X\circ m)) + \hat{c}_{u\circ m}(X\circ m). \label{eq.generaleigenfunctionmoebius} 
\end{align}
From this we conclude that all the estimates for the eigenfunctions which we derived in the case of harmonic maps also remain true for the general case of critical points of conformally invariant variational problems.

\appendix
\section{Index considerations}\label{sec.appA}
To be self-contained we present Sylvester's law of inertia in our setting.
\medskip

For this we let $H$ be a Hilbert space and $V$ is an arbitrary vector space. Moreover, we assume that 
\begin{itemize}
    \item[i)] the embedding $H\hookrightarrow V$ is compact and dense,
    \item[ii)] $a:H\times H\to \R$ is a continuous and symmetric bilinear form
    \item[iii)] $\langle \cdot, \cdot,\rangle_S:V\times V\to \R$ is a scalar product on $V$ and there exists $\lambda>0$ so that 
    \[
    a(\cdot,\cdot)+\lambda \langle \cdot, \cdot \rangle_S
    \]
    is coercive, i.e. there exists $\alpha>0$ so that
    \[
    a(x,x)+\lambda \langle x,x\rangle_S \geq \alpha \|x\|_H^2
    \]
    for all $x\in H$.
\end{itemize}
We remark that $a$ induces an operator $A:H\to H^*$ via $a(x,y)=\langle Ax,y\rangle_H$ where $\langle \cdot,\cdot \rangle_H$ denotes the given scalar product on $H$. Similarly $\langle \cdot, \cdot \rangle_S$ induces an operator $I_S:H\to H^*$ on $H$.

It follows from the Lax-Milgram theorem that the map
\[
A+\lambda I_S:H\to H^*
\]
is a continuous bijection and thus we obtain a map $T:V\to V$ via 
\[
V\xrightarrow{i_S} H^* \xrightarrow{(A+\lambda I_S)^{-1}} H \hookrightarrow V,
\]
where $i_S$ is defined via $( i_S (v),x)=\langle v,x\rangle_S$ for all $x\in H$, $v\in V$ and $(\cdot,\cdot)$ denotes the dual pairing.

By assumption $i)$ the map $T$ is compact and thus $\sigma(T)$ is discrete with $0$ as the only possible accumulation point. In particular, we have a basis of eigenfunctions corresponding to the discrete set of eigenvalues.

We next observe that $T$ is symmetric, since for $v,w\in H$ and $x,y\in H$ so that $(A+\lambda I_S)x=v$ respectively $(A+\lambda I_s)y=w$ we have
\[
\langle Tv,w\rangle_S =\langle x,w\rangle_S =\langle x,(A+\lambda I_S)y\rangle_S =\langle (A+\lambda I_S)x, y\rangle_S=\langle v,y\rangle_S =\langle v,Tw\rangle_S.
\]
Next we introduce the operator $A_S:=(T^{-1}-\lambda id)$ which is densely defined on $V$ with domain $i(H)$, which corresponds to $i_S^* \circ A$ where $i_S$ is the inclusion of $V$ into $H^*$ via $\langle \cdot, \cdot \rangle_S$. This follows since for $x\in H$ we let $y$ be defined via $y=T^{-1}\circ i(x)$. Hence $i(x)=T(y)$ resp. $x=(A+\lambda I_S)^{-1}\circ i_S(y)$. This is equivalent to $(A+\lambda I_S)x=i_S(y)$ and thus
\[
y-\lambda i(x)=i_S^*\circ i_S(y)-\lambda i_S^*\circ I_S(x)=i_S^* Ax.
\]
Here we used that $(i_S^*\circ i_S)=id$.

We note that $A_S$ is not continuous and we have
\[
\langle A_Sx,y\rangle_S=\langle A_Sy,x\rangle_S
\]
for all $x,y\in H$.

The following Lemma characterizes the nullity and the index of the operators which we just defined.
\begin{lemma}\label{char.index}
With the notations from above we have the following characterizations:
\begin{itemize}
    \item[i)] $\cup_{\lambda<0} \operatorname{ker}(A_S-\lambda id)=\cup \{W\subset H:\, a|_W<0\}=W_{max}$
    \item[ii)] $\operatorname{ker} (A_S)=\{x\in H:\, a(x,y)=0\ \ \ \forall y\in H\}$.
\end{itemize}
\end{lemma}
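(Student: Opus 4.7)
The plan is to first isolate a single fundamental identity linking the abstract operator $A_S$ to the bilinear form $a$, and then to obtain both statements $(i)$ and $(ii)$ from the spectral theorem for compact symmetric operators applied to $T$. The key algebraic identity is
\[
\langle A_S z, w \rangle_S = a(z,w) \quad \text{for all } z \in H,\ w \in H.
\]
This follows directly from the construction in the excerpt: for $z \in H$, the element $y := T^{-1}(i(z)) \in V$ satisfies $(A+\lambda I_S)z = i_S(y)$ by the definition of $T$, and since $A_S z = y - \lambda z$, one gets $Az = i_S(A_S z)$ in $H^*$. Pairing with $w \in H$ produces the identity.

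Next I would invoke the spectral theorem for $T$. By hypothesis $i \colon H \hookrightarrow V$ is compact and dense, and one checks (as noted in the excerpt) that $T$ is symmetric with respect to $\langle \cdot,\cdot\rangle_S$ and injective (if $Tv=0$ then $i_S(v)=0$, hence $v=0$ by density). The compactness assumption and the coercivity of $a + \lambda \langle \cdot,\cdot\rangle_S$ force the spectrum of $T$ to consist of strictly positive eigenvalues $\tau_i$ with $\tau_i \to 0$. This gives an orthonormal basis $\{e_i\}$ of $(V, \langle\cdot,\cdot\rangle_S)$ with $e_i = \tau_i T^{-1}(e_i) \in i(H)$, so each $e_i$ lies in the domain of $A_S$ and $A_S e_i = \mu_i e_i$ with $\mu_i = \tau_i^{-1} - \lambda$. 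Since $\mu_i + \lambda = \tau_i^{-1} \to \infty$, only finitely many $\mu_i$ are negative.

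Statement $(ii)$ is then immediate from the fundamental identity together with density of $H$ in $V$: $A_S z = 0$ is equivalent to $\langle A_S z, w\rangle_S = 0$ for all $w \in H$, which by the identity is equivalent to $a(z,w)=0$ for all $w \in H$. For $(i)$, expand any $x \in H$ in the basis as $x = \sum c_i e_i$ (convergent in $V$). The coercivity estimate $\alpha \|x\|_H^2 \le a(x,x) + \lambda \|x\|_S^2 = \sum (\mu_i + \lambda) c_i^2$ shows that the series $\sum \mu_i c_i^2$ converges absolutely, and one obtains the diagonalization $a(x,x) = \sum \mu_i c_i^2$. The finite-dimensional space $W_- := \operatorname{span}\{e_i : \mu_i < 0\}$ is therefore a subspace on which $a$ is negative definite, and any subspace $W \subset H$ on which $a$ is negative definite satisfies $\dim W \le \dim W_-$: otherwise $W$ would intersect the $\langle \cdot,\cdot\rangle_S$-orthogonal complement of $W_-$ nontrivially, producing $x \neq 0$ with $c_i=0$ for all $\mu_i<0$ and hence $a(x,x) = \sum_{\mu_i \ge 0} \mu_i c_i^2 \ge 0$, a contradiction.

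The main technical obstacle I anticipate is the careful handling of the unbounded operator $A_S$ — specifically, justifying that its eigenfunctions coincide with the eigenfunctions of the compact operator $T$, that they form a basis of the $\langle\cdot,\cdot\rangle_S$-completion of $V$, and that the diagonalization formula $a(x,x) = \sum \mu_i c_i^2$ actually converges for arbitrary $x \in H$. All of this is controlled by the coercivity hypothesis, which pins down the signs of $\tau_i$ and forces the positive tail of the spectrum to dominate the $H$-norm, but the bookkeeping between the three topologies on $V$, $H$, and $H^*$ is what requires care.
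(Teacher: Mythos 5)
Your proposal is correct and follows essentially the same route as the paper: you extract the identity $\langle A_S z, w\rangle_S = a(z,w)$, diagonalize the compact symmetric operator $T$ to get an orthonormal eigenbasis with only finitely many negative $\mu_i$, read off $(ii)$ directly, and establish $(i)$ by the same orthogonal-complement dimension count used in the paper's proof (if a negative-definite subspace $W$ exceeded $\dim W_-$, it would meet $W_-^\perp$ nontrivially and produce a vector with $a(x,x)\ge 0$). The only cosmetic difference is that you set up the full $\langle\cdot,\cdot\rangle_S$-orthonormal expansion for arbitrary $x\in H$ and argue convergence of $\sum\mu_i c_i^2$ via coercivity, whereas the paper works directly with finite sums of eigenvectors for elements of $\hat W$ and $\hat W^\perp$; both yield the same conclusion.
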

\begin{proof}
The statement $ii)$ is obvious since we have the identity
\[
( i_S^*\circ A x,y)=a(x,y)=(Ax,y)
\]
for all $x,y\in H$.
\newline
Now we show $i)$. For this we note that we can write
\[
\sigma(A_S)= \{\lambda_i:\, i\in \N\}=\| \frac{1}{\mu}-\lambda:\, \mu \in \sigma(T)\}.
\]
We know that $\lambda_i\in \R$, $\lambda_i<\lambda_{i+1}$ for all $i\in \N$ and $\lim_{i\to \infty} \lambda_i = \infty$. If $x\in \cup_{\lambda<0} \text{ker} (A_S-\lambda id)=:\hat{W}$, then it follows from the above that there exists $N\in \N$ so that
\[
x=\sum_{j=1}^N x_j,
\]
where $x_j\in \text{ker}(A_S-\lambda_j id)$. Thus we calculate
\begin{align*}
    a(x,x)=& \sum_{j=1}^N a(x_j,x)=\sum_{j=1}^N \langle A_S x_j, x\rangle_S\\
    =& \sum_{j=1}^N \lambda_j \langle x_j,x\rangle_S =\sum_{\lambda=1}^N \lambda_j \|x_j\|^2<0
\end{align*}
and thus $a|_{\hat{W}\otimes \hat{W}}$ is negative definit.

This shows that $\hat{W}\subset W_{max}$. 

For the other inclusion we let $x\in W_{max} \cap \hat{W}^{\perp} \subset \cup_{j>N} \text{ker} (A_S-\lambda_j id)=\cup_{\lambda\geq 0} \text{ker} )(A_S-\lambda id)$. Thus we get
\begin{align*}
    a(x,x) =\sum_{j>N} \langle A_Sx_j,x\rangle_S \geq \sum_{j>N} \lambda_j \|x_j\|^2 \geq 0
\end{align*}
but since also $x\in W_{max}$ we get $a(x,x)\leq 0$ and hence $x=0$.
\end{proof}
\begin{remark}
\begin{itemize}
\item[1)] Note that it follows from the above characterization that the index and the nullity are independent of the underlying vector space and the scalar product.
\item[2)] In our case we consider the symmetric bilinear form associated to the second variation of the energy at a critical harmonic map $u\colon M \to N$ along a variation $X \in u^*TN$ i.e.
\[\frac{d^2}{dt^2}E(u_t)=D^2E(u)[X,X]= \int_{M} (|\nabla X|^2 - \langle A_u^2(X), X\rangle) \, dv_g\;.\]
Hence if we choose $\langle\langle \cdot, \cdot \rangle \rangle_u$ the weak form of the eigenvalue equation is given by $D^2E(u)[X,Y]=\lambda \langle \langle X,Y \rangle \rangle_u$ for all $Y \in C^1(M, u^*TN)$. Written in integral form and as a PDE it reads 
\begin{align}
    \int_{M} \langle (\nabla X, \nabla Y\rangle - \langle A_u^2(X), Y\rangle) \, dv_g &= \lambda \int_{M} \langle X + A_u^2(X), Y\rangle \, dv_g \nonumber\\
    -P(u)\Delta X - A_u^2(X) &= \lambda (X + A_u^2(X)) \label{eq.eignevalueequation}
\end{align}
where $P(p)\colon \R^n \to T_pN$ denotes the orthogonal projection onto the tangent space of $N$ at the point $p\in M$.
\end{itemize}
\end{remark}


\begin{thebibliography}{10}

\bibitem{chen99}
J.~Chen and G.~Tian.
\newblock Compactification of moduli space of harmonic mappings.
\newblock {\em Comm. Math. Helv.}, 74:201--237, 1999.

\bibitem{chenliwang}
L.~Chen, Y.~Li and Y.~Wang.
\newblock The refined analysis on the convergence behavior of harmonic map sequence from cylinders. 
\newblock {\em J. Geom. Anal.} 22:942--963, 2012. 

\bibitem{chodoshmantoulidis}
O.~Chodosh and C.~Mantoulidis.
\newblock Minimal surfaces and the Allen-Cahn equation on 3-manifolds: index, multiplicity, and curvature estimates. 
\newblock {\em Ann. of Math. (2)}, 191:213--328, 2020. 

\bibitem{dalio}
F.~Da Lio, M.~Gianocca and T.~Rivi\`ere.
\newblock Morse index stability for critical points to conformally invariant Lagrangians.
\newblock arXiv:2212.03124, 2022.

\bibitem{ding95}
W.~Y. Ding and G.~Tian.
\newblock Energy identity for a class of approximate harmonic maps from
  surfaces.
\newblock {\em Comm. Anal. Geom.}, 3:543--554, 1995.

\bibitem{grueter}
M.~Gr\"uter.
\newblock Conformally invariant variational integrals and the removability of isolated singularities. 
\newblock {\em Manuscripta Math.} 47:85--104, 1984. 

\bibitem{helein02}
F.~H{\'e}lein.
\newblock {\em Harmonic maps, conservation laws and moving frames}, volume 150
  of {\em Cambridge Tracts in Mathematics}.
\newblock Cambridge University Press, Cambridge, 2002.

\bibitem{stern}
M.~Karpukhin and D.~Stern.
\newblock Min-max harmonic maps and a new characterization of conformal eigenvalues.
\newblock arXiv:2004.04086, to appear in {\em J. Eur. Math. Soc.}

\bibitem{lammsharp}
T.~Lamm and B.~Sharp.
\newblock Global estimates and energy identities for elliptic systems with antisymmetric potentials. 
\newblock {\em Comm. Partial Differential Equations}, 41:579--608, 2016. 


\bibitem{lin98}
F.~Lin and C.~Wang.
\newblock Energy identity of harmonic map flows from surfaces at finite
  singular time.
\newblock {\em Calc. Var. Partial Differ. Equ.}, 6:369--380, 1998.

\bibitem{lin99}
F.~Lin and C.~Wang.
\newblock Harmonic and quasi-harmonic spheres.
\newblock {\em Comm. Anal. Geom.}, 7:397--429, 1999.

\bibitem{lin02}
F.~Lin and C.~Wang.
\newblock Harmonic and quasi-harmonic spheres {II}.
\newblock {\em Comm. Anal. Geom.}, 10:341--375, 2002.

\bibitem{marquesneves}
F.~Marques and A.~Neves.
\newblock Morse index of multiplicity one min-max minimal hypersurfaces. 
\newblock {Adv. Math.} 378, Paper No. 107527, 2021. 

\bibitem{micallefmoore}
M.~Micallef and J.~Moore.
\newblock Minimal two-spheres and the topology of manifolds with positive curvature on totally isotropic two-planes. 
\newblock {\em Ann. of Math. (2)} 127:199--227, 1988. 

\bibitem{parker96}
T.~Parker.
\newblock Bubble tree convergence for harmonic maps.
\newblock {\em J. Differ. Geom.}, 44:595--633, 1996.

\bibitem{qing95}
J.~Qing.
\newblock On singularities of the heat flow for harmonic maps from surfaces into spheres.
\newblock {\em Comm. Anal. Geom.} 3:297--315, 1995. 

\bibitem{qing97}
J.~Qing and G.~Tian.
\newblock Bubbling of the heat flow for harmonic maps from surfaces.
\newblock {\em Comm. Pure Appl. Math.}, 50:295--310, 1997.

\bibitem{riviere}
T.~Rivi\`ere.
\newblock Conservation laws for conformally invariant variational problems.
\newblock {\em Invent. Math.}, 168:1--22, 2007. 

\bibitem{sacks81}
J.~Sacks and K.~Uhlenbeck.
\newblock The existence of minimal immersions of $2$-spheres.
\newblock {\em Annals of Math.}, 113:1--24, 1981.

\bibitem{sun1}
Y.~Sun
\newblock Morse index bound for minimal two spheres.  
\newblock {\em J. Geom. Anal.} 32:3, Paper No. 72, 2022. 

\bibitem{sun2}
Y.~Sun
\newblock Morse index bound for minimal torus.  
\newblock arXiv:2111.06922, 2021. 

\bibitem{yin1}
H.~Yin.
\newblock Generalized neck analysis of harmonic maps from surfaces. 
\newblock {\em Calc. Var. Partial Differential Equations} 60:3, Paper No. 117, 2021. 

\bibitem{yin2}
H.~Yin.
\newblock Higher-order neck analysis of harmonic maps and its applications.
\newblock {\em Ann. Global Anal. Geom.} 62:457--477, 2022. 
\end{thebibliography}
\end{document}